\renewcommand{\baselinestretch}{1.5}
\theoremstyle{plain}		
				\newtheorem{them}{Theorem}
				\newtheorem{thm}{Theorem}[section]
				\newtheorem{prop}[thm]{Proposition}
				\newtheorem{lem}[thm]{Lemma}		
						\newtheorem{cor}[thm]{Corollary}
\theoremstyle{definition}		\newtheorem{df}[thm]{Definition}
						\newtheorem{ex}[thm]{Example}	
						\newtheorem{step}{Step}	
\newtheorem{rem}[thm]{Remark}
	\theoremstyle{remark} 	
\DeclareMathOperator{\T}{T}
\DeclareMathOperator{\h}{H}
\DeclareMathOperator{\p}{P}
\DeclareMathOperator{\R}{RiSt}
\DeclareMathOperator{\s}{Sp}
\DeclareMathOperator{\ft}{Ft}
\DeclareMathOperator{\coWP}{coWP}
\DeclareMathOperator{\WP}{WP}
\DeclareMathOperator{\supp}{supp}
\DeclareMathOperator{\B}{bij}
\DeclareMathOperator{\Bp}{B^{+}}
\DeclareMathOperator{\homeo}{Homeo}
\title{Embeddings into left-orderable simple groups}
\date{\today}
\author{Arman Darbinyan}
\address{Texas A\&M, USA}
\email{arman.darbin@gmail.com}
\author{Markus Steenbock }
\address{IRMAR\\ Univ Rennes et CNRS\\ 35000 Rennes \\ France}
\email{markus.steenbock@univ-rennes1.fr}
\subjclass[2010]{20F60, 20E32, 20F10}
\keywords{embedding theorems, left-ordered groups, simple groups, word problem, computability on groups}
\begin{document}

\begin{abstract}
We prove that every countable left-ordered group embeds into a finitely generated left-ordered simple group. Moreover, if the first group has a computable left-order, then the simple group also has a computable left-order. 
 We also obtain a Boone-Higman-Thompson type theorem for left-orderable groups with recursively enumerable positive cones. These embeddings are Frattini embeddings, and isometric whenever the initial group is finitely generated. 
 
 Finally, we reprove Thompson's theorem on word problem preserving embeddings into finitely generated simple groups and observe that the embedding is isometric. 
\end{abstract}

 \maketitle

 \section{Introduction}

A group is \emph{simple} if it has no proper non-trivial normal subgroups. Infinite finitely generated simple groups were  discovered in \cite{higman_finitely_1951}. In fact, every  countable group embeds into a finitely generated simple group \cite{hall_embedding_1974,gorjuskin_imbedding_1974}, see also \cite{schupp_embeddings_1976,thompson_word_1980}.

\subsection{Left-order preserving embeddings into simple groups}

A group is left-ordered if it has a  linear order that is invariant under multiplications from the left.

By \cite[Theorem 4.5]{kim_chain_2019}, every finitely generated left-ordered groups embeds into a finitely generated left-ordered group whose derived subgroup is simple.

 Infinite finitely generated simple and left-ordered groups were discovered by Hyde and Lodha in \cite{hyde_finitely_2018}, see also \cite{mattebon_groups_2018,hyde_uniformly_2019}. 
We extend the construction of such groups \cite{hyde_finitely_2018,mattebon_groups_2018} as follows.

\begin{them}\label{main1} Every countable left-ordered group $G$ embeds into a finitely generated left-ordered simple group $H$. Moreover, the order on $H$ continues the order on $G$.
\end{them}

We also study additional geometric and computability properties of such embeddings, see Remark \ref{IR: Frattini} and Theorem \ref{main2}.

A subgroup $G$ of $H$ is called \emph{Frattini embedded} if any two elements of $G$ that are conjugate in $H$ are also conjugate in $G$.  Also, if there exist finite generating sets $X$ and $Y$ of $G$ and $H$, respectively, such that the word metric of $G$ with with respect to $X$ coincides with the word metric of $G$ with respect to $Y$, then it is said that $G$ is \emph{isometrically embedded} in $H$.

\begin{rem}\label{IR: Frattini} The embedding of Theorem \ref{main1} can be chosen to be a Frattini embedding. If $G$ is finitely generated, the embedding is also isometric. 
\end{rem}

 A systematic study of computability aspects of orders on groups was initiated in \cite{downey_recursion_1986}, see also \cite{downey_survey}. 
A left-order is \emph{computable} if it is decidable whether a given element is positive, negative or equal to the identity. In particular, a finitely generated computably left-ordered group has a decidable word problem. 

The following theorem is the computable version of Theorem \ref{main1}.
\begin{them}\label{main2} Every countable computably left-ordered group $G$ embeds into a finitely generated computably left-ordered simple group $H$. Moreover, the order on $H$ continues the order on $G$.

In addition, the embedding is a Frattini embedding, and if $G$ is finitely generated, then it  is isometric. 
\end{them}

\subsection{Boone-Higman and Thompson's theorem revisited}

A landmark result on computability in groups is the Boone-Higman theorem. It states that a finitely generated group has decidable word problem if and only if it embeds into a simple subgroup of a finitely presented group. Thompson strengthened Boone-Higman's theorem by showing that the simple group can be chosen to be finitely generated \cite{thompson_word_1980}.  The next theorem is a version of Thompson's theorem that, in addition, preserves the geometry of the group.

\begin{them}[cf. Theorem \ref{T: thompson}]\label{T: main 4}  Every 
 countable group $G$ embeds into a finitely generated simple group $H$ such that if $G$ has decidable word problem, then so does $H$. 
 
In addition, the embedding is a Frattini embedding. If $G$ is finitely generated, then the embedding is isometric. 
\end{them}

\begin{rem} Belk and Zaremsky \cite[Theorem C]{belk_zaremsky} recently proved that every finitely generated  group isometrically embeds into a finitely generated simple group, but they did not study the Frattini property or computability properties of their embedding. Their result and Theorem \ref{T: main 4} strengthen a theorem of Bridson, who proved that every finitely generated group quasi-isometrically embeds into a finitely generated group without any non-trivial finite quotient  \cite{bridson_embedding_98}. 
\end{rem}

\begin{rem} If the group $G$ in Theorem \ref{T: main 4}  is not finitely generated, instead of saying $G$ has decidable word problem, it is more common to say that $G$ is a {computable} group (see Definition \ref{D: computable group} below).
 \end{rem}

Bludov and Glass obtained a left-orderable version of the Boone-Higman theorem by showing that a left-orderable group has decidable word problem if and only if it embeds into a simple subgroup of a finitely presented left-orderable group \cite[Theorem E]{bludov_word_2009}. In this context, it is natural to ask whether the simple group can be made finitely generated, cf. \cite[p. 251, Problem 4]{glass_ordered_1981}. 

The next theorem answers this question in the positive given that the set of positive elements is recursively enumerable. Namely, the following theorem holds.

\begin{them}
\label{theorem-bludov-glass}
Let $G$ be a left-orderable finitely generated group that has a recursively enumerable positive cone with respect to some left-order. Then $G$ has decidable word problem if and only if $G$ embeds into a finitely generated simple subgroup of a finitely presented left-orderable group.
\end{them}

\begin{rem}
The existence of left-orderable groups with decidable word problem that do not embed in a group with computable left order was shown in \cite{arman_new}.

Also, the existence of finitely generated left-orderable groups with decidable word problem but without recursively enumerable positive cone is first  shown in \cite{arman_new}. Earlier, the analogous result for countable but not finitely generated groups was shown in \cite{harrison_left-orderable_2018}.
\end{rem}

The question whether Theorem \ref{theorem-bludov-glass} holds without the assumption that $G$ has a left-order with recursively enumerable positive cone remains open. Also it is open whether a finitely generated left-orderable simple group with decidable word problem but without recursively enumerable positive cone exists.

\subsection{Sketch of the embedding constructions}

We sketch the proof of Theorems \ref{main1} and \ref{main2}. We start with a countable computably left-ordered group $G$.
\begin{step}[Embedding into a finitely generated group]
 By a classical wreath product construction \cite{neumann_embedding_1960} every countable  left-ordered group embeds into a $2$--generated left-ordered group. A version of this embedding construction with additional computability properties was established in \cite{darbinyan_group_2015}. We use the construction from \cite{darbinyan_group_2015} (see Theorem~\ref{L: embedding into 2 generated}) to embed the initial left-orderable countable group $G$ into a two-generated left-orderable group that also preserves the computability properties of the left-order on $G$. 
\end{step}

\begin{step}[Embedding into a perfect group] A group is perfect if it coincides with its first derived subgroup. By Step 1 we assume that $G$ is finitely generated.  We let $T(\varphi)$ be a  finitely generated left-ordered simple group  of \cite{mattebon_groups_2018}. We note that $T(\varphi)$ is computably left-ordered and $G$ embeds into a finitely-generated left-orderable perfect subgroup $G_1$ of $G\wr_{\mathbb{R}} T(\varphi)$ that preserves the computability property of the left-order on $G$, see Theorem \ref{T: splinter}. Our construction might be considered as a modification of a similar embedding result from \cite{thompson_word_1980}.
\end{step}

\begin{step}[Embedding into a simple group of piecewise homeomorphisms of flows] Finally, let  $G_1$ be a finitely generated (computably) left-ordered perfect group in which $G$ embeds. 
  We embed $G_1$ into a finitely generated (computably) left-ordered simple group. To this end, we extend the construction of \cite{mattebon_groups_2018}. In \cite{mattebon_groups_2018}, Matte-Bon and Triestino construct a finitely generated left-orderable simple group $T(\varphi)$ of piecewise linear homeomorphisms of flows of the suspension of a minimal subshift $\varphi$, see Subsection \ref{S: T(Phi)}.
  
  The main observation is that every group $H$ of piecewise homeomorphisms of an interval with countably many breakpoints (see Definition \ref{Def: the new space of functions}) embeds into a subgroup $T(H,\varphi)$ of piecewise homeomorphisms of flows of the suspension, see Definition \ref{D: T(G,Phi)}. We then study the subgroup $T(H,\varphi)$. In particular, it is finitely generated if $H$ is so. Just as in \cite{mattebon_groups_2018}, a standard commutator argument 
   implies that it is simple given that $H$ is perfect, and if $H$ preserves the orientation of the interval, then it is also left-orderable.
  
 Finally, we use the dynamical realisation of left-orderability: every left-ordered group embeds into the group of orientation preserving homeomorphisms of an interval. We use this embedding to conclude that $G_1$, and hence also $G$, embeds into the finitely generated left-ordered simple group $T(G_1,\varphi)$. To analyze the required computability aspects as well as to show that the embeddings are isometric and Frattini, we use a modified version of the dynamical realization of left-orderability, see Proposition \ref{prop-modified-dyn-rel}. 
\end{step}

If $G$ has decidable word problem, it embeds into a group of computable piecewise homeomorphisms of an interval \cite[\S 3]{thompson_word_1980}. If we use this embedding in Step 3 of the above construction, then we obtain the aforementioned result of \cite{thompson_word_1980}, Theorem \ref{T: main 4}.

\subsection{Plan of the paper}

In Section \ref{S: computability}, we review computable groups and computably left-ordered groups. In particular, we explain the computability of the standard dynamical realization of left-orderabitity. 

After that, we come to the main parts of our paper. In Section \ref{S: homeo of flows}, we discuss Step 3, that is, we extend Matte-Bon and Triestino's construction of left-orderable finitely generated simple groups in order to embed perfect groups into finitely generated simple groups. Step 2, our version of Thompson's splinter group construction, is discussed in Section \ref{S: embedding into perfect}. Step 1 is reviewed in Section \ref{S: embedding into 2 generated}.

Finally, we prove Theorems \ref{main1}, \ref{main2} and \ref{theorem-bludov-glass}. To analyze the computability aspects required by Theorem \ref{main2} as well as to obtain the isometry and Frattini properties of the embeddings, we introduce a stronger version of the standard dynamical realization of left-orderability that we call modified dynamical realization(see Section \ref{S: modified}).
In Section \ref{S: Thompson's theorem}, we prove Theorem \ref{T: main 4} using the groups of piecewise homeomorphisms of flows discussed in Section \ref{S: homeo of flows}.

%
\medskip
 \noindent
\textbf{Acknowledgements.} 
We thank Y. Lodha, M. Triestino and M. Zaremsky for their interest and useful comments on a previous version of this work. 
The first named author thanks Universit\'e Rennes-I for hospitality and financial support and was supported by ERC-grant GroIsRan no.725773 of A. Erschler. The second named author was supported by ERC-grant GroIsRan no.725773 of A. Erschler and by Austrian Science Fund (FWF) project J 4270-N35.

 \section{Computability on groups} \label{S: computability}

We collect some facts from computability theory on groups, cf. \cite{frohlich_effective_1956,rabin_computable_1960,matsev_constructive_1961,downey_recursion_1986}.

A function $f:\mathbb{N}\to \mathbb{N}$ is \emph{computable} if there is a Turing machine such that it outputs the value of $f$ on the input. A subset of $\mathbb{N}$ is \emph{recursively enumerable} if there is a computable map (i.e. enumeration) from $\mathbb{N}$ onto that set. Moreover, it is \emph{recursive} if, in addition, its complement is recursively enumerable as well. 

 Similarly, a function $f:\mathbb{Q}\to \mathbb{Q}$ is \emph{computable} if there is a Turing machine that, for every input $(m,n)\in \mathbb{N}\times \mathbb{N}$,  outputs $(p,q)\in \mathbb{N}\times \mathbb{N}$ such that $f\left(\frac{m}{n}\right)=\frac{p}{q}$.
 
Moreover, if $J$ is an interval in $\mathbb{R}$, then we call a function $f:J\to \mathbb{R}$ \emph{computable} if its restriction to the rational numbers in $J$ maps to $\mathbb{Q}$ and this restriction is computable.

\subsection{Group presentations and the word problem}
Let $S$ be a finite set. We denote by $(S\cup S^{-1})^*$ the set of all finite words over the alphabet $S\cup S^{-1}$.

\begin{df}[word problem]
Let $G=\langle S \rangle$ be a finitely generated group. The word problem is decidable if the set $\WP(S):=\{w \in (S \cup S^{-1})^* \mid w=_G 1 \}$ is recursive.
\end{df}
The decidability of the word problem does not depend on the choice of the finite generating set.

\subsection{Computable groups}

For a countable group $G=\{g_1,g_2,\ldots\}$, let $ \mathfrak{m}:\mathbb{N}\times \mathbb{N} \to \mathbb{N} $   
be the function such that 
\begin{align*}
\hbox{$\mathfrak{m}(i,j)=k$ if $g_ig_j=g_k$.}
\end{align*}

\begin{df}\label{D: computable group}
A countable group $G$ is \emph{computable} if there exists an enumeration of its elements $G=\{g_1,g_2,\ldots\}$ such that the corresponding $ \mathfrak{m}:\mathbb{N}\times \mathbb{N} \to \mathbb{N} $ is computable.
\end{df}

\begin{rem}\label{R: computable group and word problem}
 A finitely generated group is computable if and only if it has decidable word problem.
\end{rem}

\subsection{Computably left-ordered groups}

An order $\preceq$ on $\mathbb{N} \times \mathbb{N}$ is \emph{computable} if there is a Turing machine that takes a pair $(i,j)\in \mathbb{N}\times \mathbb{N}$ as input and decides whether or not $i \preceq j$. 

For a countable linearly ordered enumerated set $S=\{s_1,s_2,\ldots\}$, let $ \preceq$ on $\mathbb{N}\times \mathbb{N}$  
be the relation such that 
\begin{align*}
\hbox{$i\prec j$ if $s_i$ is smaller than $s_j$ and $i=j$ if $s_i$ is equal to $s_j$.}
\end{align*}

A countable set $S$ is \emph{computably orderable} with respect to the enumeration $S=\{s_1,s_2,\ldots\}$ if there is a linear order on $S$ such that the corresponding to it order relation $ \preceq$ on $\mathbb{N}\times \mathbb{N}$ is computable.  

\begin{df}\label{D: computably left ordered group}
A countable group $G$ is \emph{computably left-orderable} with respect to the enumeration $G=\{g_1,g_2,\ldots \}$ if  there is a left-order $\preceq$ on $G$ such that the corresponding order relation $ \preceq$ on $\mathbb{N}\times \mathbb{N}$ is computable. In this case $\preceq$ is called computable left-order on $G$ with respect to the enumeration  $G=\{g_1,g_2,\ldots \}$. \end{df}
\begin{rem}
In case $G=\langle S \rangle$, $|S|<\infty$, $G$ is computably left-orderable with respect to some enumeration if and only if there is a left-order $\preceq$ on $G$ such that the set $\{ w \in (S \cup S^{-1})^* \mid 1 \preceq w \} \subseteq (S \cup S^{-1})^*$ is a recursive set. In this case $\preceq$ is called computable left-order on $G$, and its computability property does not depend on the choice of the finite generating set, see \cite{arman_new} for details.
\end{rem}

\begin{rem} \label{R: computable group and computable order}
Every computably left-orderable group is computable. In particular, every finitely generated computably left-ordered group has decidable word problem. 
\end{rem}

By \cite{harrison_left-orderable_2018} there is a left-orderable computable group without any computable left-order. In fact, there is a finitely generated  orderable computable group without any computable order \cite{arman_new}.

\begin{ex} The natural order on the group of rational numbers is computable.  
\end{ex}

\begin{ex}[Thompson's group \texorpdfstring{$F$}{F}] \label{E: Thompson's group}

A \emph{dyadic point} in $\mathbb{R}$ is one of the form $\frac{n}{2^m}$, for some $n\, ,m\in \mathbb{Z}$. An interval is \emph{dyadic} if its endpoints are dyadic. 

Let  $J$ be a closed dyadic interval in $\mathbb{R}$.   We denote by $\mathbb{Q}_J$ the set of the rational points on $J$.  We denote by $F_J$ the group of piecewise linear homeomorphisms of $J$ that are differentiable except at finitely many dyadic points and such that the respective derivatives, where they exist, are powers of $2$.
 
 The group $F_J$ is isomorphic to Thompson's group $F$,  see e.g. \cite[\S 1]{cannon_introductory_1996}. Therefore, it is $2$--generated and left-orderable, see e.g. \cite[Corollary 2.6, Theorem 4.11]{cannon_introductory_1996}. Moreover, the word problem in $F$ is decidable, cf. \cite{thompson_word_1980}.
 
 We define the left-order on $F_J$ in the following way, cf. \cite[Theorem 4.11]{cannon_introductory_1996}: let $\mathbb{Q}_J=\{q_1,q_2,\ldots\}$ be a fixed recursive enumeration. Let $f,g\in F_J$ be distinct and let $i_0$ be the minimal index such that $f(q_{i_0})\not= g(q_{i_0})$. Then $f<g$ if $f(q_{i_0})< g(q_{i_0})$. 
 
 In fact, this order is computable: indeed, let $f,g\in F_J$ be given as words in a finite generating set. As the word problem in $F_J$ is decidable, the case of $f=g$ can be computably verified.  We note that the elements of $F_J$ are computable functions. In addition, an element of $F_J$ is uniquely determined by its restriction to the rationals.  
 Thus, if $f\not=g$, the minimal index $i_0$ such that $f(i_0)\not= g(i_0)$ exists and can be computably determined. Therefore, the order is computable.
  \end{ex}

\subsection{Positive cones}

If $G$ is left-ordered, then the \emph{positive cone} is the set of all positive elements of $G$. We note that the positive cone is a semigroup. 
In fact, if $G$ admits a linear order such that the positive elements generate a semigroup in $G$, then the linear order is a left-order on $G$, see \cite{deroin_groups_2014,clay_ordered_2016}. 
  
  \begin{lem}
  \label{lem: computable left-orders}
  Let $G=\{g_1, g_2, \ldots \}$ be a finitely generated group with a fixed enumeration, and `$\prec$' be a left-order on $G$. Then `$\prec$' is computable  if and only if its positive cone is recursively enumerable and the word problem in $G$ is decidable.
  \end{lem}
  \begin{proof}
  	If the order is computable, then the word problem is decidable, see Remark \ref{R: computable group and computable order}. In addition, there is a partial algorithm to confirm that a positive element, given as a word in the generators of $G$, is positive. This implies that the positive cone is recursively enumerable. 
  	
 On the other hand, if the positive cone is recursively enumerable and the word problem decidable, let $w$ be a word in the generators of $G$. We first computably determine whether or not $w=1$. If $w=1$, we stop. Otherwise either $w$ or $w^{-1}$ is in the positive cone of $G$. As the positive cone is recursively enumerable, there is a partial algorithm to confirm that a positive element is in the positive cone. We simultaneously run this algorithm for $w$ and $w^{-1}$. As one of these elements is positive, it stops for $w$ or $w^{-1}$. We thus know whether $w$ is positive or negative. This completes the proof. 
  \end{proof}

\subsection{Dense orders}

A linear order $\preceq$ on a set $S$ is \emph{dense} if for any $g \prec h \in S$, there exists $g' \in S$ such that $g \prec g' \prec h$.

  Recall that by $\mathbb{Q}_J$ the set of the rational points on an interval $J\subset \mathbb{R}$. We fix a recursive enumeration $\mathbb{Q}_J=\{q_0, q_1, \ldots \}$ such that the natural order on $\mathbb{Q}_J$ is computable with respect to this enumeration.
  
\begin{lem}[cf. Theorem 2.22 of \cite{clay_ordered_2016}]
\label{Lem: Cantor's argument} Let $J\subset \mathbb{R}$ be an interval. 
Let $S=\{s_0, s_1, \ldots\}$ be a countable ordered set. If the order on $S$ is dense and does not have maximal and minimal elements, then there is an order preserving bijection $\Phi: S \to \mathbb{Q}_J$. 

If, in addition, the order on $S$ is computable, then the map $i \mapsto \Phi(s_i)$ is computable. 
\end{lem}

We recall the proof of this lemma, that we will later modify to prove Lemma \ref{Lem: Cantor's argument modified}.
\begin{proof}
We define $\Phi:  S \to \mathbb{Q}_J$ iteratively as $\Phi: s_{j_i} \mapsto q_{j_i}$ for $i \in \mathbb{N}$. First, define $s_{j_0} = s_0$ and $q_{j_0} = q_0$. Now assuming that $S_k:=\{s_{j_0}, \ldots ,s_{j_k}\}$ and $Q_k:=\{q_{j_0}, \ldots ,q_{j_k} \}$ are already defined, let us define its extension according to the following procedure:
\begin{itemize}
\item[(1)] Choose the smallest $i$ such that $s_i \notin S_k$ and set $S_{k+1}=S_k\cup \{s_i\}$. Choose the smallest $j$ such that $q_j \notin Q_k $ and $\Phi: S_k \cup \{s_i \} \to Q_k \cup \{q_j \}$ is an order preserving bijection. Set $s_{j_{k+1}}= s_{i}$ and $q_{j_{k+1}}= q_{j}$.
\item[(2)] Choose the smallest $j$ such that $r_j \notin Q_{k+1}$, and choose the smallest $i$ such that $s_i \notin S_{k+1}$ and $\Phi^{-1}: Q_{k+1} \cup \{q_j\} \to S_{k+1}\cup \{s_i\}$ is an order preserving bijection. Set $s_{j_{k+2}}= s_{i}$ and $q_{j_{k+2}}= q_{j}$.
\item[(3)] Repeat the process starting from Step 1.
\end{itemize}
Since the orderings of $S$ and $\mathbb{Q}_J$ are computable with respect to the fixed  enumerations, the above described iterative procedure of defining $\Phi$ is also computable. Therefore, the map $i \mapsto \Phi(s_i)\in \mathbb{Q}_J$ is computable.
\end{proof}

\begin{rem}\label{R: embedding in dense} If $G$ is left-ordered, then the lexicographical left-order on the group $G\times \mathbb{Q}$ is dense (and has no minimal or maximal elements). In addition, if $G=\{g_1, g_2, \ldots \}$ has a computable left-order, the lexicographical left-order on $G\times \mathbb{Q}$ is computable with respect to the induced enumeration. Moreover, the standard embedding $G \hookrightarrow G\times \mathbb{Q}$ that sends $g\mapsto (g,0)$ is computable and a Frattini embedding.
 \end{rem}

\subsection{Dynamical realization of computably left-ordered groups}\label{S: dynamical}

Let $J$ be an interval in $\mathbb{R}$. We denote the group of homeomorphisms of $J$  by $\homeo(J)$, and the subgroup of orientation preserving homeomorphisms of $J$ by $\homeo^+(J)$. 

We note that for every interval $J\subset \mathbb{R}$, every countable left-ordered group $G$ admits an embedding of $G$ into $\homeo^+(J)$, see e.g. \cite[\S 2.4]{clay_ordered_2016} \cite[Proposition 1.1.8]{deroin_groups_2014}. We also note the following fact.  
  

\begin{prop}\label{P: computable left order} Let $G$ be a countable group.

 If $G$ is left-orderable, then there is an embedding $\rho_G:G \hookrightarrow \homeo^+(J)$  such that, for all $g\in G \setminus \{1\}$, the map $\rho_G(g): J \rightarrow J$ does not fix any rational interior point of $J$. 

 If $G$ is computably left-orderable, then, in addition, all the maps $\rho_G(g)$ can be granted to be computable. 
\end{prop}

We actually need a strong variant of Proposition \ref{P: computable left order}, see Proposition \ref{prop-modified-dyn-rel}, but to the best of our knowledge, the computability aspect of Proposition  \ref{P: computable left order} does not exist in the literature neither.
For this reason we decided to include a proof of Proposition \ref{P: computable left order}. We analyze computability aspects based on the proof given in  \cite[\S 2.4]{clay_ordered_2016}.

By Remark \ref{R: embedding in dense}, we may assume that the order on $G$ is dense. Then, by Lemma \ref{Lem: Cantor's argument}, there is an order preserving bijection $\Psi: G \to \mathbb{Q}_J$.

\begin{df} 
\label{def-dyn-rel-embedding}
Let $\Psi: G \to \mathbb{Q}_J$ be an order preserving bijection. We define $\rho_G^{\Psi}:G \to \homeo^+(J)$
by prescribing 
   $\rho_G^{\Psi}(g_i)(\Psi(h))= \Psi(g_ih)$ on the dense subset $\Psi(G)=\mathbb{Q}_J\subset J$. 
\end{df}

We note the following. 
\begin{lem}\label{L: embedding computable left order} Let $\Psi: G \to \mathbb{Q}_J$ be an order preserving bijection. The map $\rho_G^{\Psi}:G \to \homeo^+(J)$ is an embedding. 
Moreover, if the map $i \mapsto \Psi(s_i)$ is computable, then for all $i \in \mathbb{N}$, $\rho_G^{\Psi}(g_i)$ is computable. \qed
\end{lem}

\begin{lem}\label{L: dynamical realisation no fixed points} Let $\Psi: G \to \mathbb{Q}_J$ be an order preserving bijection. If $x\in \mathbb{Q}_J$ such that $\rho_G^{\Psi}(g)(x)= x$, then $g=1$. 
\end{lem}
\begin{proof}
	Indeed, suppose that $\rho_G^{\Psi}(g)(x) = x$. Let $h = \Psi^{-1}(x)$. Then by definition of $\rho_G^{\Psi}$, we have $\rho_G^{\Psi}(x) = \Psi(gh)$, i.e. $\Psi(h)=\Psi(gh)$. But since $\Psi: G \rightarrow \mathbb{Q}_J$ is a bijection, $h=gh$ and $g=1$.
\end{proof}

\begin{proof}[Proof of Proposition \ref{P: computable left order}] Suppose $G=\{g_1, g_2, \ldots \}$ has a computable left-order with respect to the given enumeration. By Lemma \ref{Lem: Cantor's argument}, we may assume that the map $i \mapsto \Psi(g_i)$ is computable. 
By Lemmas \ref{L: embedding computable left order} and \ref{L: dynamical realisation no fixed points}, $\rho_G^{\Psi}:G \to \homeo^+(J)$ satisfies the properties required by Proposition \ref{P: computable left order}.
\end{proof}


 \section{Groups of piecewise homeomorphisms of flows} \label{S: homeo of flows}

We first collect definitions and facts on groups of piecewise linear homeomorphisms of flows from \cite{mattebon_groups_2018}.  As every countable group embeds as a subgroup in a group of piecewise  homeomorphisms of flows, we then start to study such groups in more generality.

We recall from Example \ref{E: Thompson's group} that a \emph{dyadic point} in $\mathbb{R}$ is one of the form $\frac{n}{2^m}$ for some $n\, ,m\in \mathbb{Z}$. Moreover, for a dyadic interval $J$, $F_J$ is Thompson's group acting on $J$.
 
 \subsection{Minimal subshifts} \label{S: cantor systems}

Let $A$ be a finite alphabet and $\varphi$ a shift on $A^{\mathbb{Z}}$. If $X$ is a closed and shift-invariant subset of $A^{\mathbb{Z}}$, then $(X,\varphi)$ is a dynamical system that is called \emph{subshift}. A subshift is \emph{minimal} if the set of $\varphi$--orbits is dense in $X$.


Let $(X,\varphi)$ be a minimal subshift of $A^{\mathbb{Z}}$. Then $X$ is totally disconnected and Hausdorff, and every $\varphi$--orbit is dense in $X$.

The \emph{suspension} (or mapping torus) $\Sigma$ of $(X,\varphi)$ is the quotient of $X \times \mathbb{R}$ by the equivalence relation defined by 
$
(x,t) \sim (\varphi^n (x), t-n),\, n\in \mathbb{Z}. 
$
We denote the corresponding equivalence class of $(x, t) \in X\times \mathbb{R}$  by $[x,t]$.

The map $\Phi_t$ that sends $[x,s]$ to $[x,s+t]$ is a homeomorphism and defines a flow $\Phi$ on $\Sigma$, the \emph{suspension flow}, so that $(\Sigma,\Phi)$ is a dynamical system as well. The orbits of the suspension flow are homeomorphic to the real line. 

We denote by $\h(\varphi)$ the group of homeomorphisms of $\Sigma$ that preserves the orbits of the suspension flow, and by $\h_0(\varphi)$ the subgroup of $\h(\varphi)$ that, in addition,  preserves the orientation on each orbit.

\subsection{The group  \texorpdfstring{$\T(\varphi)$}{of piecewise dyadic homeomorphisms of flows}} \label{S: T(Phi)}

Let $C$ be a clopen subset of $X$ and let $J\subset \mathbb{R}$ be of diameter $< 1$. The embedding of $C\times J$ into $X\times \mathbb{R}$ descends to an embedding into $\Sigma$ that we denote by $\pi_{C,J}$.

For every  clopen $C\subset X$ and subset $J$ of diameter $< 1$ in $\mathbb{R}$, the map $\pi_{C,J}$ is a \emph{chart} for the suspension, whose image is denoted by $U_{C,J}$. 
 If $z$ is in the interior of $U_{C,J}$, then $\pi_{C,J}$ is a \emph{chart at $z$}.

 \begin{df}[Dyadic chart] \label{D: dyadic chart} Let $C$ be a clopen subset of $X$, and let $J$ be a dyadic interval of length $< 1 $ in $\mathbb{R}$. Then $\pi_{C,J} : C\times J \hookrightarrow \Sigma$ is called \emph{dyadic chart}. 
\end{df}
 
\begin{df}[Dyadic map] A \emph{dyadic map} is a map $f$ of real numbers such that $f(x)=\lambda x +c$, where $\lambda $ is  a power of $2$ and $c$ is a dyadic rational.
\end{df} 
 \begin{df}[Definition 3.1 of {\cite{mattebon_groups_2018}}] \label{D: T(Phi)} The group $\T(\varphi)$ is the subgroup of $\h_0(\varphi)$ consisting of all elements $h\in \h_0(\varphi)$ such that for all $z\in \Sigma$ there is a dyadic chart $\pi_{C,J}$ at $z$ and a piecewise dyadic map $f:J\to f(J)$ with finitely many breakpoints such that the restriction of $h$ to $U_{C,J}$ is given by  
$
[x,t]\mapsto [x,f(t)]
$.
\end{df}

We recall that $F_J$ denotes the group of piecewise dyadic homeomorphisms of $J$ with finitely many breakpoints. 
\begin{df}\label{D: Elements of T(Phi)} Let $\pi_{C,J}$ be a dyadic chart and let $f\in F_J$. Then $f_{C,J}$ is the map in $\T(\varphi)$ whose restriction to $U_{C,J}$ is given by 
\begin{align*}
[x,t] \mapsto [x,f(t)]
\end{align*}
 and that is the identity map elsewhere. We let $F_{C,J}$ be the subgroup of $\T(\varphi)$ generated by the elements $f_{C,J}$ for all $f\in F_J$.
\end{df}

The group $\T(\varphi)$ is infinite, simple, left-ordered and finitely generated \cite[Corollary C]{mattebon_groups_2018}. As noted in \cite{mattebon_groups_2018}, the first examples of such groups \cite{hyde_finitely_2018} are subgroups of $\T(\varphi)$.

In Section \ref{S: normal subgroups}, we revisit the proof of simplicity given in \cite{mattebon_groups_2018}. In Section \ref{S: left-orders}, we revisit the proof of left-orderability given in \cite{mattebon_groups_2018}. To this end we note the following. 

\begin{lem}[Lemma 3.4 of \cite{mattebon_groups_2018}]\label{L: orbits} For every $z\in \Sigma$, the $T(\varphi)$--orbit of $z$ is dense in the $\Phi$--orbit of $z$. In particular, the $\T(\varphi)$--action on $\Sigma$ is minimal.
\qed
\end{lem}

For any group $H$, we denote by $H'$ the first derived subgroup of $H$.

\begin{lem}[Lemma 4.8 of \cite{mattebon_groups_2018}]\label{L: coverings} Let $C\subset X$ be clopen and $J\subset \mathbb{R}$ be dyadic. If $C\times J$ is covered by a family $\{C_i\times J_i\}_{i \in \mathcal{I}}$ for clopen $C_i\subset C$ and dyadic intervals $J_i\subset J$, then $F'_{C,J}$ is contained in the group generated by $\bigcup _{i \in \mathcal{I}}  F'_{C_i,J_i}$
\qed
\end{lem}

We assume without restriction that $(X,\varphi)$ is a minimal subshift over the two letter alphabet $A=\{0, 1\}$. 
 For $k,\, n \in \mathbb{Z}$ and a word $w=a_{0}a_{1}\ldots a_{k}$ over $A$, we denote by $C_{n,w}$ the cylinder subset of $X$ consisting of sequences $(x_i)_{i\in \mathbb{Z}}$ such that $x_{n}x_{n+1}\ldots x_{n+k} = w$. 
 
As a matter of fact, the cylinder subsets are clopen and form a basis for the topology of $X$.  
 We note that 
 \begin{align*}
 \varphi \left(C_{n,w}\right) = C_{{n-1},w}. 
\end{align*}  
Let $I_0:=[-1/4,1/2]$ and $I_1:=[1/4,9/8]$. 
\begin{lem}[Proposition 6.2 of \cite{mattebon_groups_2018}]\label{L: finite generation  of T(Phi)} The group $\T(\varphi)$ is generated by  $F_{X,I_0}$, $F_{C_{0,0},I_1}$ and $F_{C_{0,1},I_1}$. In particular, $\T(\varphi)$ can be generated by six elements.\qed
\end{lem}


\subsection{Piecewise homeomorphisms and group embeddings}\label{S: piecewise homeo of flows}

\begin{df}
\label{Def: the new space of functions} A bijection $h:I\to J$ of subsets $I,J\subseteq \mathbb{R} $ is a \emph{piecewise homeomorphism} if
\begin{itemize}
\item  there are half-open pairwise disjoint intervals $I_i=[x_i,y_i)$, $i=1, 2, \ldots,$ whose union is $I$,
\item for all of these $I_i$ the restriction of $h$ to $I_i$ is a homeomorphism onto its image,
\end{itemize}
If, in addition, the intervals $I_i$ and $h(I_i)$ are dyadic, we say that $h$ has \emph{dyadic breakpoints}.
If, the restrictions of $h$ to the intervals $I_i$ are dyadic maps, we say that $h$ has \emph{dyadic pieces}.
\end{df}

If $S$ is a set, $\B(S)$ denotes the group of permutations of $S$.   

 Let us fix a half-open interval $\mathfrak{J}=[x, y)$ that is strictly contained in $[0,1]$.   
Let $\mathcal{C}(\mathfrak{J}) \subset \B(\mathfrak{J})$ denote the subgroup of all piecewise homeomorphisms with dyadic breakpoints on $\mathfrak{J}$. The subgroup of $\mathcal{C}(\mathfrak{J})$ of orientation preserving bijections is denoted by $\mathcal{C}^+(\mathfrak{J})$.

\begin{ex}
 Every countable group embeds into $\mathcal{C}(\mathfrak{J})$.
\end{ex}
\begin{ex}
Every countable left-orderable group embeds into the group of orientation preserving homeomorphisms of $\mathfrak{J}$, and therefore into $\mathcal{C}^+(\mathfrak{J})$.
\end{ex}

Since the set of non-dyadic rational points of $\mathfrak{J}$ is dense in $\mathfrak{J}$, the next lemma is a basic property of the (piecewise) continuity.
\begin{lem}
\label{lem-restriction-to-rationals}
Every function in $\mathcal{C}(\mathfrak{J})$ is uniquely determined by its values on non-dyadic rational points on $\mathfrak{J}$. Moreover, every function from $\mathcal{C}(\mathfrak{J})$ is continuous at non-dyadic rational points.\qed
\end{lem}

To construct respective embeddings into finitely generated simple groups we propose the following extension of the construction in \cite{mattebon_groups_2018}.

\subsection{Groups of flows of piecewise homeomorphisms}
Let us fix a subgroup $G$ of $ \mathcal{C}(\mathfrak{J})$.

\begin{df}\label{D: elements of T(G,Phi)}

 Define $ \tau_{\Sigma, \mathfrak{J}}: G \rightarrow \B(\Sigma)$ as follows: for each $g \in G$, let $\tau_{\Sigma, \mathfrak{J}}(g): = g_{\Sigma, \mathfrak{J}}$, where $g_{\Sigma, \mathfrak{J}}$ is defined by
\begin{align*}
g_{\Sigma, \mathfrak{J}}: [x,t] \mapsto [x,g(t)], \mbox{ for all } t\in \mathfrak{J},
\end{align*}
and $g_{\Sigma, \mathfrak{J}}$ is the identity map elsewhere.
\end{df}

We extend Definition \ref{D: T(Phi)} as follows.

\begin{df}[The group $T(G,\varphi)$]
\label{D: T(G,Phi)}
Let $G $ be a subgroup of $ \mathcal{C}(\mathfrak{J})$. We define $T(G,\varphi)$ as the subgroup of $\B(\Sigma)$ generated by $\tau_{\Sigma, \mathfrak{J}}(G)$ and $T(\varphi)$.
\end{df}
\begin{lem}
\label{Lem: fin gen of the main group}	
The group $G$ embeds into $T(G,\varphi)$ by $g \mapsto g_{\Sigma, \mathfrak{J}}$. Moreover, if $G$ is finitely generated, then $T(G,\varphi)$ is finitely generated as well.
\end{lem}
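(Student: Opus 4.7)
The plan is to establish three things in turn: (i) for each $g \in G \subset \mathcal{C}(\mathfrak{J})$, the formula of Definition \ref{D: elements of T(G,Phi)} gives a well-defined bijection $g_{\Sigma,\mathfrak{J}}$ of $\Sigma$ and $g \mapsto g_{\Sigma,\mathfrak{J}}$ is a homomorphism $G \to \B(\Sigma)$; (ii) this homomorphism is injective; (iii) if $G$ is finitely generated, the finite generation of $T(G,\varphi)$ reduces to Lemma \ref{L: finite generation  of T(Phi)}. None of these steps is delicate, but the key point underlying (i) and (ii) is that the chart $\pi_{X,\mathfrak{J}}$ is injective, so each class in $U_{X,\mathfrak{J}}$ has a \emph{unique} representative with second coordinate in $\mathfrak{J}$.

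For (i), I would first recall that since $\mathfrak{J}=[x,y)$ is strictly contained in $[0,1]$, two distinct points $(x_1,t_1),(x_2,t_2)\in X\times \mathfrak{J}$ cannot be equivalent under $(x,t)\sim(\varphi^n(x),t-n)$: equivalence would force $|t_1-t_2|$ to be a positive integer, which is incompatible with both $t_1,t_2$ lying in a half-open subinterval of $[0,1]$. Hence every point of $U_{X,\mathfrak{J}}$ has a unique representative $(x,t)$ with $t\in\mathfrak{J}$, and the rule $[x,t]\mapsto[x,g(t)]$ is unambiguous. Since $g\in\mathcal{C}(\mathfrak{J})$ is a bijection of $\mathfrak{J}$, the map $g_{\Sigma,\mathfrak{J}}$ sends $U_{X,\mathfrak{J}}$ bijectively to itself and is the identity off $U_{X,\mathfrak{J}}$, so it is a bijection of $\Sigma$. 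The homomorphism identity $(gh)_{\Sigma,\mathfrak{J}}=g_{\Sigma,\mathfrak{J}}\circ h_{\Sigma,\mathfrak{J}}$ then reduces to two trivial case checks: on $U_{X,\mathfrak{J}}$ it is the statement that $g(h(t))=g(h(t))$, using that $h(t)\in\mathfrak{J}$ again; off $U_{X,\mathfrak{J}}$ both sides fix the point.

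For (ii), if $g_{\Sigma,\mathfrak{J}}$ is the identity of $\Sigma$, then picking any $x\in X$ and any $t\in\mathfrak{J}$, the equality $[x,g(t)]=[x,t]$ together with the uniqueness of representatives in $\mathfrak{J}$ (both $t$ and $g(t)$ lie in $\mathfrak{J}$) forces $g(t)=t$. Thus $g$ is the identity of $\mathcal{C}(\mathfrak{J})$, and in particular of $G$.

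For (iii), assume $G=\langle S\rangle$ with $S$ finite. By Lemma \ref{L: finite generation  of T(Phi)} the group $\T(\varphi)$ is generated by a finite set $S'$ of six elements. By Definition \ref{D: T(G,Phi)}, $T(G,\varphi)$ is generated by $\T(\varphi)\cup\tau_{\Sigma,\mathfrak{J}}(G)$, hence by the finite set $S'\cup\tau_{\Sigma,\mathfrak{J}}(S)$. The only step I would flag as conceptually load-bearing is the uniqueness of representatives in $\mathfrak{J}$; everything else is a direct bookkeeping argument.
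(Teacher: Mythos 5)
Your proposal is correct and takes essentially the same route as the paper: the paper disposes of injectivity by remarking that $g_{\Sigma,\mathfrak{J}}=\mathrm{id}$ forces $g=1$, and of finite generation by citing that $T(G,\varphi)$ is generated by $\tau_{\Sigma,\mathfrak{J}}(G)$ together with the finitely generated $T(\varphi)$. You have merely made explicit the observation (uniqueness of the representative with second coordinate in $\mathfrak{J}$) that the paper leaves implicit.
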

\begin{proof}
The second statement follows from the definition of $T(G,\varphi)$  and the fact that $T(\varphi)$ is finitely generated. For the first statement, it is enough to notice that, by definition, $g_{\Sigma, \mathfrak{J}}$ is an identity map if and only if $g=1$.
\end{proof}
\begin{df}[(Non-dyadic) rational points]
A point $[x, t] \in \Sigma$ is called a \emph{rational point} on $\Sigma$ if $t \in \mathbb{Q}$. If, in addition, $t$ is not dyadic, we say that $[x, t]$ is a\emph{ non-dyadic rational point}.
\end{df}
\begin{lem} 
\label{lem-dense-sets-of-rationals}
There exists a dense and recursive set of non-dyadic rational points in $\Sigma$.
\end{lem}
\begin{proof}
  Let us choose a recursive countable subset $\mathcal{X}:=\{x_1, x_2, \ldots \} \subset X$ that is dense in $X$, for example, the set of proper ternary fractions. Moreover, for all $i\in \mathbb{N}$, let $R_i \subset \Sigma$ be defined as
 $$ R_i :=\{[x_i, t]\mid \hbox{$t$ is rational and non-dyadic}\}.$$

We denote $R= \cup_{i=1}^{\infty} R_i.$
Note that each of $R_i$ is a recursive set.  Therefore, since $\mathcal{X}$ is also recursive by our choice, the we get that $R$ is recursive as well. 
\end{proof}

\begin{lem}
\label{L: unique extension for unifying groups}
If $G $ is  a subgroup of $ \mathcal{C}(\mathfrak{J})$, then the elements of $T(G,\varphi)$ are uniquely defined by their values on any (countable) dense set of  non-dyadic rational points of $\Sigma$. Moreover, the elements of $T(G,\varphi)$  are continuous at non-dyadic rational points of $\Sigma$.
\end{lem}
\begin{proof} By Lemma \ref{lem-dense-sets-of-rationals} there a fixed countable dense set of non-dyadic rational points in $\Sigma$. 
Let $R \subset \Sigma$ be such a set. Let us define $\mathcal{X} \subset X$ such that for each $x\in \mathcal{X}$ there exists $t\in \mathbb{Q}$ such that $[x, t] \in R$. Since $R$ is dense, $\mathcal{X}$ is dense as well.

Note that since $\mathcal{X} \subset X$ is dense in $X$, by Lemma \ref{L: orbits}, the set $\{[x, t] \mid x \in \mathcal{X}, t \in \mathbb{R}\}$ is dense in $\Sigma$. Therefore,  the elements of $T(G,\varphi)$ are uniquely defined by their restrictions to the $\Phi$-orbits of the elements $[x,0]$ for $x \in \mathcal{X}$. Now, the lemma follows from the combination of this observation with Lemma~\ref{lem-restriction-to-rationals}.
\end{proof}

\subsection{Simplicity and rigid stabilizers} \label{S: normal subgroups}

To prove simplicity results, we use the following standard tool.

Let $Y$ be a set, and $H$ a group acting faithfully on $Y$. Then the \emph{rigid stabilizer} of a subset $U\subset Y$ is the subgroup of $H$ whose elements move only points from $U$. We denote the rigid stabilizer of $U$ by $\R(U)$.

The following lemma is used to prove simplicity of $\T(\varphi)$ in \cite{mattebon_groups_2018}, cf. \cite[Lemma 2.1]{mattebon_groups_2018}.

\begin{lem}\label{L: epstein} 
Let $N$ be a normal subgroup of $H$. If there is a non-trivial element $g\in N$ and a non-empty subset $U\subset Y$  such that $g(U)\cap U = \emptyset$, then the first derived subgroup ${\R(U)}'$ is in $N$.
\qed
\end{lem}

  A group $G$ is called \emph{perfect} if it coincides with its first derived subgroup, that is $G=G'(=[G, G])$.

\begin{lem}
\label{lem: the unifying lemma}
If $G \leq \mathcal{C}(\mathfrak{J})$ is a perfect group, then $T(G,\varphi)$ is simple.
\end{lem}
\begin{proof}
Assume  that $N $ is a normal subgroup of $T(H,\varphi)$  and $N \neq\{1\}$. The proof of Lemma \ref{lem: the unifying lemma} follows from the following two claims.\\

\emph{Claim 1:} The group $\T(\varphi)$ is in $N$.

The proof of Claim 1 follows the arguments of simplicity in \cite{mattebon_groups_2018}.

\begin{proof}[Proof of Claim 1]

Let us fix a non-trivial element $g\in N$. Then, by Lemma \ref{L: unique extension for unifying groups}, there exists a non-dyadic rational point $y \in \Sigma$  such that $g(y)\neq y$.
By Lemma \ref{L: unique extension for unifying groups}, the elements of $T(G,\varphi)=\langle \tau_{\Sigma, \mathfrak{J}}(G), T(\varphi) \rangle $ are continuous at the non-dyadic rational points of $\Sigma$. Therefore, since $\Sigma$ is a Hausdorff space and $g(y) \neq y$, there exists an open neighborhood $U$ of $y$ such that $gU \cap U =\emptyset$. By Lemma \ref{L: epstein}, ${\R(U)}'$ is in $N$. 

Let $z\in \Sigma$, and choose $h\in \T(\varphi)$ such that $h(z)\in U$. Such a map $h$ exists as, by Lemma \ref{L: orbits}, the action of $\T(\varphi)$ on $\Sigma$ is minimal. Then, as $z\in h^{-1}U$ and $\R(h^{-1}U)=h^{-1} \R(U) h$, the rigid stabilizer $\R(h^{-1}U)$ is in $N$. 

As $h^{-1}U$ is open, there is a chart $\pi_{C, K}$ at $z$ which is in $h^{-1}U$. Since ${F_{C, K}}$ is in the rigid stabilizer of $h^{-1}U$, we conclude that ${F_{C, K}}' \subseteq N$. 

Therefore, for every chart $\pi_{C, K}$ there is a covering $\{C_i\times K_i\}$ of $C\times K$ such that ${F_{C_i, K_i}}'$ is in $N$. By Lemma \ref{L: coverings}, we conclude that for every chart $\pi_{C, K}$ the group ${F_{C, K}}'$ is in $N$. Now we use that $\bigcup_{I \subsetneq K} F_I \subset {F_K}'$ (cf. \cite[Theorem 4.1]{cannon_introductory_1996}) to conclude that the generators of $\T(\varphi)$ are in $N$.
\end{proof}

\emph{Claim 2:} For every normal subgroup $N$ of $\T(G,\varphi)$,   $\tau_{\Sigma, \mathfrak{J}}(G)$ is in $N$.
\begin{proof}[Proof of Claim 2]
Let $f\in F_{[0,1]}$ be an element of Thompson's group such that $J \cap f(J) = \emptyset$. Then $f_{X\times [0,1]}$ is in $\T(\varphi)$ and separates  $U_{X\times J}$ from $U_{X, f(J)}$. By the previous claim, $ f_{X, J}\in N$. Therefore, the first derived subgroup of the rigid stabilizer of the interior of  $U_{X\times J}$ is in $N$ by Lemma \ref{L: epstein}. Finally, we note that $\tau_{\Sigma, \mathfrak{J}}(G)$ is in the rigid stabilizer of the interior of $U_{X, J}$. Thus $\tau_{\Sigma, \mathfrak{J}}(G)'$ is in $N$. As $G$ is assumed to be perfect, this yields the claim.
\end{proof}
Now, to conclude the proof of Lemma \ref{lem: the unifying lemma}, we only need to combine the above claims with the fact that, by definition, $T(G,\varphi)=\langle \tau_{\Sigma, \mathfrak{J}}(G), T(\varphi) \rangle $.
\end{proof}

\subsection{Left-orders \texorpdfstring{on $T(G,\varphi)$}{}} \label{S: left-orders}

\begin{lem}
\label{lem: the unifying lemma-left order}
If $G \leq \mathcal{C}^+(\mathfrak{J})$, then the group $T(G,\varphi)$ is left-orderable. Moreover, if $G$ is finitely generated and consists of computable functions, then there exists a left-order on $T(G,\varphi)$ with recursively enumerable positive cone.
\end{lem}
\begin{proof}
First of all, note that since $G \leq \mathcal{C}^+(\mathfrak{J})$, the action of $T(G,\varphi)$  on $\Phi$-orbits of elements of $X \subset \Sigma$ is orientation preserving. 

Let $R=\{[a_1, s_1], [a_2, s_2], \ldots \}$ be a fixed, recursively enumerated and dense subset of non-dyadic rationals in $\Sigma$. The existence of such sets is by Lemma \ref{lem-dense-sets-of-rationals}.

Now for $f \in T(G,\varphi)$, define $f >1$ if for the smallest index $k \in \mathbb{N}$ such that $f([a_k, s_k]) \neq [a_k, s_k]$ we have $f([a_k, s_k])=[a_k, q_k]$ such that $q_k>s_k$. Therefore, by Lemma \ref{lem-restriction-to-rationals}, for all $f\neq 1$ either $f>1$ or $f^{-1}>1$ and for $f_1, f_2 >1$, $f_1f_2>1$. By Lemma \ref{L: unique extension for unifying groups}, the defined order is a left-order on $T(G,\varphi)$ .

Recall that the set $R$ is recursive. Therefore, to check whether $f>1$, we can consecutively compute the values $f([a_1, s_1]), f([a_2, s_2]), \ldots$. We stop at the first $k$ such that $f([a_k, s_k]) \neq [a_k, s_k]$. By Lemma \ref{L: unique extension for unifying groups}, this procedure stops if and only if $f \neq 1$, and since $G$ consists of computable maps, this procedure recursively enumerates the positive cone of the above defined left-order.
\end{proof}

\begin{cor}
\label{C: T(Phi) rec enum}
The group $T(\varphi)$  has a left-order with recursively enumerable positive cone.
\end{cor}


\section{Chart representations and the word problem in \texorpdfstring{$T(G,\varphi)$}{T(G,Phi)}}

Recall that $\mathfrak{J}$ is a fixed interval that is strictly contained in $[0,1)$. Let us fix a subgroup $G$ in $\mathcal{C}({\mathfrak{J}})$, and assume that $G$ is finitely generated and consists of computable functions. 

\subsection{Chart representations}

\begin{df}[Chart representations]\label{D: chart representation}  Let $h\in T(G,\varphi)$. A \emph{chart representation} of $h$ is a finite collection of triples 
$(C_i\times I_i, C_i\times J_i, h_i) $, where $h_i$ is a piecewise homeomorphism with countably many breakpoints on $I_i$ and $h_i(I_i)=J_i$, such that $\{U_{C_i\times I_i}\}$ and $\{U_{C_i\times J_i}\}$ cover $\Sigma$, and such that the restriction of $h$ to ${U_{C_i\times I_i}}$ is the function  $[x,t] \mapsto [x,h_i(t)]$. 

Each of the triples $(C_i\times I_i, C_i \times J_i, h_i) $ is called a \emph{chart}. The maps $h_i$ are \emph{local representations} of $h$. 
\end{df}

\begin{rem} Chart representations play the role of the \emph{(partial) tables} of Thompson in \cite{thompson_word_1980}.
\end{rem}

\begin{rem} By definition (compactness of $X$ and $\Sigma$) every $h\in T(G,\varphi)$ has a chart representation. 
\end{rem}

\begin{rem} Chart representations are not unique.
\end{rem}

\begin{ex}\label{Ex: chart} $f\in F_{X\times [-1/4,1/2]}$. We give two chart representations for $f$:
\begin{enumerate}
\item $\big\{(X\times  [-1/4,1/2], X \times [-1/4,1/2], f),$\\
 $(X\times [1/2,3/4], X\times [1/2,3/4], id)\big\}.$
\item $\big\{(X\times  [0,1/2], X \times f([0,1/2]), f|_{[0,1/2]})$\\  
$( X \times [3/4,1],X\times f( [-1/4,0])+1, t\mapsto f|_{[-1/4,0]}(t-1) +1)$,\\
$(X\times [1/2,3/4], X\times [1/2,3/4], id)\big\}$.
\end{enumerate}
\end{ex}

\begin{df}[$G$-dyadic maps]
\label{G-dyadic}
A piecewise homeomorphism $\Lambda:I_n \to I_0$ is a \emph{$G$-dyadic map} if $\Lambda=g_1f_1 \ldots g_nf_n$ is a composition of {piecewise homeomorphisms} $f_i:I_i\to J_i \subseteq \mathfrak{J}$ and $g_i:J_i \to I_{i-1}$, 
where all $f_i$ are dyadic maps, $f_i\not=id$ whenever $i\not=n$, and the $g_i$ are restrictions of non-trivial elements of $G$.  
\end{df}

\begin{rem} A $G$-dyadic map could a priori be equal to the identity map. 
\end{rem}

\begin{df}[Canonical chart representations]
\label{D: types and canonical representations}	
Let $h\in T(G,\varphi)$, and let  $\{(C_i\times I_i, C_i\times J_i,h_i)\}_{1\leqslant i\leqslant n}$ be a chart representation of $h$ such that for every $h_i$, $1\leq i \leq n$, one of the following takes place:
\begin{enumerate}
    \item[(I)] \label{A} $h_i$ is a dyadic map on $I_i$;
    \item[(II)] \label{B}  $h_i=f\Lambda$ is the composition a dyadic map $f$ and a $G$-dyadic map $\Lambda$. 
\end{enumerate}
Then, the representation $\{(C_i\times I_i, C_i\times J_i,h_i)\}_{1\leqslant i\leqslant n}$ is called \emph{canonical}. Also, charts for which $h_i$ corresponds to (I) or  (II) are called charts of type  (I)  or (II), respectively.
\end{df}

\subsection{Operations on charts}

The following operations can be applied to go from one chart representation to another.
\begin{df}[Inverse]\label{D: inverse}
Let $\{(C_i\times I_i, C_i\times J_i,h_i)\}_{1\leqslant i\leqslant n}$ be a chart representation of $h \in T(G, \varphi)$. Then  $\{(C_i\times J_i, C_i\times I_i,h^{-1}_i)\}_{1\leqslant i\leqslant n}$ is a chart representation of $h^{-1}$ and is called the \textit{inverse} of the initial one.
\end{df}

\begin{df}[Refinements]\label{D: refinement} The following operations on charts are called \emph{refinement}.
\begin{enumerate}
\item If $I=I_0\cup I_1$, then $(C\times I,C\times J, f)$ can be replaced by $(C\times I_0,C\times f(I_0), f|_{I_0})$ and 
$(C\times I_1,C\times f(I_1), f|_{I_1})$.
\item If $J=J_0\cup J_1$, then $(C\times I,C\times J, f)$ can be replaced by $(C\times f^{-1}(J_0),C\times J_0, f|_{f^{-1}(J_0)})$ and 
$(C\times f^{-1}(J_1),C\times J_1, f|_{f^{-1}(J_1)})$.
\item If $C=C_0\cup C_1$, then $(C\times I,C\times J, f)$ can be replaced by
$(C_0\times I,C_0 \times J, f)$ and $(C_1\times I,C_1 \times J, f)$.
\end{enumerate} 
\end{df}

\begin{df}[Reunions] \label{D: reunion} A \emph{reunion} is the inverse operation of a refinement.
\end{df}

\begin{df}[Shifts] \label{D: shift}  A \emph{shift} (of order $m\in \mathbb{Z}$) is replacing a triple $(C\times I,C\times J, f)$ by $(\varphi^{m} (C)\times (I-m),\varphi^m (C)\times (J-m), t\mapsto f(t+m)-m)$. 
\end{df}
\begin{rem}
A chart representation obtained by a refinements, reunion, or a shift on its charts corresponds to the same element from $T(G, \varphi)$. In particular, chart representations of elements from $T(G, \varphi)$ are not unique.
\end{rem}
\begin{rem} Since the  functions in $G$ are  computable, the operations \ref{D: inverse}, \ref{D: refinement}, \ref{D: reunion} and \ref{D: shift} are computable. 
\end{rem}

\begin{lem}\label{L: canonical generators}  $T(G,\varphi)$ is finitely generated and each of the generators can be represented by a canonical chart representation, which can be algorithmically determined.
\end{lem}
\begin{proof} Indeed, the generators of $T(\varphi)$ given by Lemma \ref{L: finite generation  of T(Phi)} can be represented as in Example \ref{Ex: chart}. One then applies a finite number of chart refinements at the breakpoints of the generating piecewise dyadic maps to obtain a canonical chart representation. The generators of $G$ can be represented by a canonical chart representation by definition. 
\end{proof}

\begin{lem}\label{L: canonicity and operations} The inverse, refinements and shifts preserve the canonicity of chart representations.  
\end{lem}
\begin{proof}
We will prove only that shift operations on charts of type (II) preserve the canonicity of chart representations, as the rest of statements of the lemma are straightforward.

  Suppose that the initial chart of type (II), on which a shift operation of order $m$ is applied, is $(C_i \times I_i, C_i \times J_i, \Lambda)$. Then a shift of order $m$ would transform it into the chart $(\varphi^{m}(C_i)\times (I_i-m), \varphi^{m}(C_i)\times (J_i-m),  \tilde{\Lambda} )$, where $\tilde \Lambda: I_i-m \rightarrow J_i-m$ is defined as $\tilde \Lambda(x)=\Lambda(x+m)-m$. 

Suppose that $\Lambda = fg_1f_1 \ldots g_nf_n$ is decomposed as in Definition \ref{G-dyadic}. Then, $\tilde\Lambda= \tilde f g_1f_1 \ldots g_n \tilde f_n$, where $\tilde f_n (x)= f_n(x+m)$ and $\tilde f (x)= f(x)-m$. The chart $\tilde \Lambda$ is also a $G$-dyadic map. Therefore, $(\varphi^{m}(C_i)\times (I_i-m), \varphi^{m}(C_i)\times (J_i-m),  \tilde{\Lambda} )$ satisfies the definition of charts of type (II) from Definition \ref{D: types and canonical representations}. Thus shift operations applied on charts of type (II) of canonical chart representations preserve the canonicity.
\end{proof}


\begin{df}[Composition]
\label{def-composition}
Let $\{(C_i\times I_i, C_i\times J_i,f_i)\}_{1\leqslant i\leqslant n}$  and  $\{(C'_i\times I'_i, C'_i\times J'_i,f'_i)\}_{1\leqslant i\leqslant m}$ be chart representations such that $ \bigcup I_i =\bigcup J_i' =[0, 1]$. Then we say that the chart representation 
$$  \{(C_{i,j}\times I_{i,j}, C_{i,j}\times J_{i,j}, f_{i,j})\}_{1\leqslant i\leqslant mn},$$ where
$$I_{i,j} = f_i'^{-1}(J_i'\cap I_j)\subseteq I'_i,~ J_{i,j}=f_j(f_i'^{-1}(J_i'\cap I_j))\subseteq J_j,$$   $$C_{i,j}= C_i \cap C_j, \text{~and~} f_{ij}=f_j\mid_{J_i'\cap I_j} \circ f_i'\mid_{f_i'^{-1}(J_i'\cap I_j)},$$
is their composition.
\end{df}
\begin{rem} \label{remark-on-compositions}
Note that if, in Definition \ref{def-composition}, the chart representations correspond to $f, f' \in T(G, \varphi)$, respectively, then the composition chart representation corresponds to $ff'$.
\end{rem}

\begin{rem}
\label{rem-composition-preserves-canonicity}
Note that if the two chart representations in Definition \ref{def-composition} are canonical, then their composition is canonical as well. In addition, finding the composition is a computable procedure.
\end{rem}

\begin{lem}\label{L: composition 1} Let $h\in T(G,\varphi)$ be given by a canonical chart representation 
$\{(C_i\times I_i, C_i,\times J_i,h_i)\}_{1\leqslant i\leqslant n}.$ 
Then there is an algorithm to determine a canonical chart representation 
$\{(C_j'\times I_j', C_j'\times J_j',h_i')\}_{1\leqslant j\leqslant n'}$  
of $h$ such that $\bigcup J_j' =[0,1]$. 
%
\end{lem}

\begin{proof} We describe the algorithm. For $1\leqslant i\leqslant n$, 

 if $J_i\subset [0,1]$ do nothing, go to $i+1$.
 
 if $J_i\cap [0,1]$ and $J_i\setminus (0,1)$ is non-empty, let $J_{i_1}=J_i\cap [0,1]$, $J_{i_2}=J_i\setminus (0,1)$ and apply a refinement (Definition \ref{D: refinement} (2)). Repeat from the beginning.

if $J_i\cap [0,1]$ is empty, determine $m$ such that $J_i-m \cap [0,1]$ is non-empty and apply a shift of order $m$ (Definition \ref{D: shift}). Repeat from the beginning. 

Since this procedure does not affect charts of type (2) from Definition \ref{D: types and canonical representations}, by Lemma \ref{L: canonicity and operations}, the canonicity of the initial chart representation is preserved.
\end{proof}
\begin{lem}\label{L: composition 2} Let $h\in T(G,\varphi)$ be given by a canonical chart representation 
$\{(C_i\times I_i, C_i,\times J_i,h_i)\}_{1\leqslant i\leqslant n}.$  
Then there is an algorithm to determine a  canonical chart representation 
$\{(C_j'\times I_j', C_j'\times J_j',h_i')\}_{1\leqslant j\leqslant n'}$  
of $h$ such that $\bigcup I_j' =[0,1]$. 
\end{lem}
\begin{proof} The proof is analogous to the proof of Lemma \ref{L: composition 1}.\end{proof} 

\begin{lem}
\label{lem-composition-is-computable}
There is an algorithm that for any two elements $f, f' \in T(G, \varphi)$, given by their canonical chart representations, computes a canonical representation of $ff'$.
\end{lem}
\begin{proof}
  By Lemmas \ref{L: composition 1} and \ref{L: composition 2}, there exists an algorithmic procedure that computes canonical chart representations $\{(C_i\times I_i, C_i\times J_i,f_i)\}_{1\leqslant i\leqslant n}$  and  $\{(C'_i\times I'_i, C'_i\times J'_i,f'_i)\}_{1\leqslant i\leqslant m}$ of respectively $f$ and $f'$ such that $ \bigcup I_i =\bigcup J_i' =[0, 1]$. Then their composition will be a canonical chart representation of $ff'$ (see Remarks \ref{remark-on-compositions} and \ref{rem-composition-preserves-canonicity}.)
\end{proof}

From the previous two lemmas we get:

\begin{lem}
\label{lem-computablility-of-chart-rep}
There exists an algorithm that for any input $f\in T(G, \varphi)$, given as a word in finite set of generators, outputs a canonical chart representation of $f$. In particular,  every element from  $T(G, \varphi)$ has a canonical chart representation.
\end{lem}
\begin{proof}
  It follows from Remark \ref{remark-on-compositions}, Lemma \ref{lem-composition-is-computable}, and the fact that the standard generators of $T(G, \varphi)$ have canonical chart representations, see Lemma \ref{L: canonical generators}.
\end{proof}

\subsection{The word problem}

The following observations are useful for studying the groups $T(G,\varphi)$.

\begin{lem}\label{L: id} Let $h\in T(G,\varphi)$ and let $\{(C_i\times I_i, C_i\times J_i,h_i)\}_{1\leqslant i\leqslant n}$ be a chart representation of $h$. 
Then $h=1$ in $T(G,\varphi)$ if and only if, for all $1\leqslant i\leqslant n$, we have $h_i=id$ (and $J_i=I_i$).
\end{lem}

\begin{proof}
Let $1\leqslant i\leqslant n$. Recall that $h$ maps $[x,t]$ to $[x,h_i(t)]$ for $(x,t) \in C_i\times I_i$. As $h=1$, 
$[x,h_i(t)]=[x,t].$ Thus, for any $x\in C_i$ and $t\in I_i$, there is $m\in \mathbb{Z}$ such that $(\varphi^m(x), t-m)=(x,h_i(t))$. We conclude that $h_i(t)=t-m$ and $\varphi^m(x)=x$. But $\varphi$ is a minimal subshift, that is, every orbit of $\varphi$ is dense. In particular, $m=0$. Therefore $h_i=id$. This yields one side of the assertion. The inverse assertion is trivial.
\end{proof}

\begin{lem}
\label{lemma-word problem in T-H-Phi} If 
 there is an algorithm to decide whether a $G$--dyadic map is equal to the identity,  
then the word problem in $T(G,\varphi)$ is decidable.
\end{lem}

\begin{proof}
By Lemma \ref{lem-computablility-of-chart-rep}, for every $h\in T(G,\varphi)$ one can algorithmically find a canonical  chart representation for $h$. By Lemma \ref{L: id}, $h=1$ if and only if for any canonical chart representation of $h$ the corresponding charts of types (I), and (II) are identity charts. If  $h_i$ a local representation in a chart of type (I), $h_i$ is a piecewise dyadic map with finitely many breakpoints, so that we can algorithmically check whether $h_i=id$. 

Now suppose that $h=f\Lambda:I\to I$ is a local representation in a chart of type (II), where $f:I_0\to I$ is dyadic and $\Lambda:I_n\to I_0$ a $G$-dyadic map. We note that $h=id$ on $I$ if, and only if, $\Lambda f = id $ on $I_0$. In fact, $\Lambda f$ is a $G$--dyadic map, so that, by assumption, we can algorithmically check whether $h=id$. 
\end{proof}

\begin{cor}\label{C: word problem T(Phi)} $T(\varphi)$ is computably left-orderable.  In particular,  the word problem in $T(\varphi)$ is decidable.
\end{cor}

\begin{proof} By Lemma \ref{lemma-word problem in T-H-Phi}, the word problem is decidable. By Corollary \ref{C: T(Phi) rec enum}, $T(\varphi)$ has a recursively enumerable positive cone. Thus, by Lemma \ref{lem: computable left-orders}, $T(\varphi)$ is computably left-orderable. 
\end{proof}

 \section{Embeddings into perfect groups} \label{S: embedding into perfect}

 Our next goal is to prove the following.

\begin{thm}\label{T: splinter} Every countable  group $G$ embeds into a finitely generated perfect group $H$. In addition, 
\begin{enumerate}
\item if $G$ is computable, then  $H$ has decidable word problem;
\item if  $G$ is left-ordered, then $H$ is left-ordered;
\item if $G$ is computably left-ordered, then the left order on $H$ is computable;
\item the embedding is a Frattini embedding.
\end{enumerate}
Moreover, in case (2) and (3), the order on $H$ continues the order on $G$.
\end{thm}

This strengthens \cite{neumann_embedding_1960}, \cite[Theorem 10A]{glass_ordered_1981} and \cite[Theorem 2.3]{thompson_word_1980}.
 If $G$ is assumed to be finitely generated, assertion $(1)$ is proved in \cite[Theorem 2.3]{thompson_word_1980}. Examples of (finitely generated) left-ordered and perfect groups are well-known, see \cite{bergman_right_1991}.

We first prove Theorem \ref{T: splinter} for finitely generated groups. In Section \ref{S: embedding into 2 generated}, we reduce the general case to the finitely generated case.

\subsection{Splinter Groups}  Let us assume that $G$ is a finitely generated group. We now construct a finitely generated perfect group in  which $G$ embeds. 
 Our construction resembles the splinter group construction of \cite[\S 2]{thompson_word_1980}. We comment on the construction of \cite{thompson_word_1980} in Section \ref{S: Thompson splinter group}.    
 
Let us fix an action of $T(\varphi)$ on the real line as follows: let us fix $z_0:=[x_0,0] \in \Sigma$. As the action of $\T(\varphi)$ on $\Sigma$ preserves the $\Phi$--orbits, $T(\varphi)$ acts on the $\Phi$--orbit of $z_0$, the action is orientation-preserving, and its orbits are dense. Finally, recall that the $\Phi$--orbit of $z_0$ is homoemorphic to $\mathbb{R}$. We fix such a homeomorphism. This induces an action of $T(\varphi)$ on $\mathbb{R}$. We fix this action of $\T(\varphi)$. 

Let $\mathcal{C}_0(\mathbb{R}, G)$ denote the group of functions from $\mathbb{R}$ to $G$ of bounded support. The action of $T(\varphi)$ on $\mathbb{R}$ induces an action $\sigma$ of $T(\varphi)$ on $\mathcal{C}_0(\mathbb{R} , G)$ such that for every $h\in \mathcal{C}_0(\mathbb{R} , G)$ and $f \in T(\varphi)$,  $$\sigma(f)(h)(s):=h(f^{-1}(s)).$$

The \emph{permutational wreath product} $G \wr_{\mathbb{R}} \T(\varphi)$ is defined as the semi-direct product $\mathcal{C}_0(\mathbb{R}, G) \rtimes_{\sigma} \T(\varphi)$, where, for $(h_1,f_1)$ and $(h_2,f_2) \in  \mathcal{C}_0(\mathbb{R}, G) \rtimes_{\sigma} \T(\varphi)$, $(h_1,f_1)(h_2,f_2):= (h_1 \sigma(f_1) (h_2),f_1f_2)$.

For every $g\in G$, we define the following function $\overline{g}$ in $\mathcal{C}_0(\mathbb{R}, G)$: 
\begin{align*}
\overline{g}(s):= \begin{cases} 
      g & \hbox{for } s\in [1/2,1), \\
      1 & \hbox{otherwise;}
   \end{cases}
\end{align*}
 and  $\overline{G}:=\{\overline{g}\mid g\in G\}$.

\begin{df}[Splinter groups]\label{D: Splinter group} The \emph{splinter group} is the subgroup of the permutational wreath product  $G \wr_{\mathbb{R}} \T(\varphi)$ generated by $\overline{G}$ and $\T(\varphi)$. We denote it by $\s(G,\varphi)$.
\end{df}

 Recall that $\T(\varphi)$ and $G$ are finitely generated. We note the following.

\begin{lem}\label{L: embedding splinter}
The group $G$ embeds into $\s(G,\varphi)$ with image $\overline{G}$. Moreover, $\s(G,\varphi)$ is finitely generated.
\qed
\end{lem}

\begin{lem}\label{L: perfect splinter} The splinter group $\s(G,\varphi)$ is perfect.
\end{lem}

To prove Lemma \ref{L: perfect splinter}, we adapt the arguments of the proof of \cite[Theorem 2.3]{thompson_word_1980}. We first prove

\begin{lem}\label{L: perfect splinter 1} The group $\overline{G}$ is in the first derived subgroup $\s(G,\varphi)'$ of $\s(G,\varphi)$.
\end{lem}

\begin{proof} Let $f_1$ and $f_2 \in T(\varphi)$ be such that $f_1$ maps $[1/2,1)$ onto $[1/4,1)$ and $f_2$ maps $[1/2,1)$ onto $[1/4,1/2)$, respectively. The existence of such elements follows from the definition of $T(\varphi)$. We note that for any $\bar{g} \in \bar{G}$
\begin{align*}
f_1\overline{g} f_1^{-1} (s) =\sigma(f_1)(\bar{g})(s)= & \begin{cases} 
      g & \hbox{for } s\in [1/4,1), \\
      1 & \hbox{otherwise;}\\
   \end{cases} 
   \\
   f_2 \overline{g} f_2^{-1} (s) =\sigma(f_2)(\bar{g})(s)= & \begin{cases} 
      g & \hbox{for } s\in [1/4,1/2), \\
      1 & \hbox{otherwise.}
   \end{cases} 
\end{align*}

Therefore, $\overline{g}=f_1\overline{g} f_1^{-1} (f_2\overline{g} f_2^{-1} )^{-1}$, hence $\overline{g}$ is a commutator element. This completes the proof.
\end{proof}

\begin{proof}[Proof of Lemma \ref{L: perfect splinter}]
Since $\T(\varphi)$ is simple, it is in $\s(G,\varphi)'$. By Lemma \ref{L: perfect splinter 1}, $\overline{G}$ is in  $\s(G,\varphi)'$ as well. Therefore,  $\s(G,\varphi)'=\s(G,\varphi)$. 
\end{proof}

\begin{lem}
\label{lem-isometry-splinter gp}
The group $G$ isometrically embeds into $\s(G,\varphi)$. 
\end{lem}
\begin{proof} Let $X$ and $Y$ be finite generating sets of $G$ and $T(\varphi)$, respectively. We prove that the embedding of $G=\langle X \rangle$ into 	$\s(G,\varphi) = \langle \bar{X} \cup {Y} \rangle$ by $g \mapsto \bar{g}$ is an isometric embedding, where $\bar{X}$ is the image of $X$ in $\s(G, \varphi)$.

Let $g \in G$. Also, let $f_i \in T(\varphi)$ and $g_i \in G$, $1\leq i \leq n$, be such that
 $$ \bar g =f_1\bar g_1 \ldots f_n \bar g_n$$
 and $| \bar g |_{\bar{X} \cup {Y}} = \sum_{i=1}^n |f_i|_{\bar{X} \cup {Y}} + \sum_{i=1}^n |\bar g_i|_{\bar{X} \cup {Y}}$, where $|\cdot|$ is the length of the group element with respect to the corresponding generating set.
 We have 
 $$\bar g = \bar{g_1}^{h_1}\bar{g_2}^{h_2} \ldots \bar{g_n}^{h_n} h_n,$$
 where $h_i=f_1 \ldots f_i$, $1\leq i \leq n$. Therefore, it must be that $h_n=1$ and 
 $$	\bar{g}(1/2) =g= \prod_{i \in I} g_i,$$
 where $I \subseteq \{1, \ldots, n \}$ is the set of indexes $i$ such that $h_i(1/2) \in [1/2, 1)$. Thus we get 
 $$\bar{g} = \prod_{i \in I} \bar g_i.$$
 Therefore, since we have $| \bar g |_{\bar{X} \cup {Y}} = \sum_{i=1}^n |f_i|_{\bar{X} \cup {Y}} + \sum_{i=1}^n |\bar g_i|_{\bar{X} \cup {Y}}$, we get $f_1=\ldots =f_n=1$ and $I= \{1, \ldots, n\}$, which implies that $|g|_X = |\bar{g}|_{\bar{X} \cup {Y}}$. Since $g$ is an arbitrary element of $G$, the last conclusion finishes the proof.
 
\end{proof}

\begin{lem}
\label{lem-frattini-splinter gp}
The embedding of $G$ into 	$\s(G,\varphi)$ by $g \mapsto \bar{g}$ is a Frattini embedding.
\end{lem}
\begin{proof}
	Let $g, h \in G$, and suppose that $\bar{g}$ and $\bar{h}$ are conjugate in $\s(G,\varphi)$. We want to show that $g$ is conjugate to $h$ in $G$.
	
There exist $e \in C_0(\mathbb{R}, G)$ and $f \in T(\varphi)$ such that $(e, f) \bar{g} (e, f)^{-1} = \bar{h}$ or, equivalently, $e \sigma(f)(\bar{g}) e^{-1}=\bar{h}$. Therefore, we have
	\begin{align*}
		\supp(e \sigma(f)(\bar{g}) e^{-1}) = \supp(\bar{h}) = [1/2, 1).
	\end{align*}
	 On the other hand, $$\supp(e \sigma(f)(\bar{g}) e^{-1}) = \supp(\sigma(f)(\bar{g}))=f(\supp(\bar{g}))=f([1/2, 1)).$$
	Thus $f([1/2, 1))=[1/2, 1)$, which implies that $\sigma(f)(\bar{g}) =\bar{g}$ (recall that $\bar{g}$ is constant on $[1/2, 1)$) and, hence, $e \bar{g} e^{-1} = \bar{h}. $ The last equality immediately implies that $g$ is conjugate to $h$ in $G$.
\end{proof}
 
\subsection{The word problem for \texorpdfstring{$\s(G,\varphi)$}{splinter groups}.} \label{S: word problem for splinter}

We recall that $\T(\varphi)$ is computably left-ordered, acts order-preservingly on $\mathbb{R}$, and that this action is computable.

We adapt a notion of splinter table introduced in \cite[p. 413]{thompson_word_1980}.
\begin{df}[Splinter table] A \emph{splinter table} corresponding to the element $(t,f)\in \s(G,\varphi)$ is a finite tuple of the form $(J_1, \ldots, J_n; g_1, \ldots, g_n; f)$, where $J_1, \ldots, J_n$ is a disjoint  finite collection of  bounded intervals from $\mathbb{R}$ whose union contains the support of $t: \mathbb{R} \rightarrow G$ such that $t(J_i)=g_i \in G$. 
\end{df}
\begin{ex} The group $\overline{G}$ coincides with the set of all splinter  tables $([1/2,1); g; 1_{T(\varphi)})$,  $g\in G$, and $\T(\varphi)$ coincides, for example, with the set of all splinter tables $([1/2,1); 1_G; f)$, $f\in \T(\varphi)$.   
\end{ex}

\begin{lem}
\label{L: splinter table} 

If $(t,f), (s,e) \in \s(G,\varphi)$ are given by their splinter tables, then their product $(t,f)(s,e)$ can be represented by a splinter table. The product splinter table can be computably determined. 
\end{lem}

\begin{proof} Suppose that the splinter tables of $(t,f)$ and  $(s,e)$ correspondingly are $$(J_1, \ldots, J_n; g_1, \ldots, g_n; f) \mbox{~and~} (I_1, \ldots, I_m; h_1, \ldots, h_m; e).$$

 Let $J:=\bigsqcup_{1\leqslant i \leqslant n} J_i$ and $I:=\bigsqcup_{1\leqslant j \leqslant m} I_j$.

Let $(r,q):=(t,f)(s,e)$. Then $q=fe$, and $r=t\, \sigma(f)s$ is a step function such that for all $1\leqslant i \leqslant n$ and for all $1\leqslant j \leqslant m$  
\[r\left( J_i \cap f(I_j)\right) = g_ih_j,\; r\left( J_i \setminus f(I)\right) = g_i,\; r\left(f(I_j)\setminus  J \right) = h_j\]
and the identity elsewhere. 

By the properties of $\T(\varphi)$, the inverse of $f$ as well as $ J_i \cap f(I_j)$, $J_i \setminus f(I)$ and $f(I_j)\setminus  J $ can be computably determined.
\end{proof}

\begin{cor}\label{C: splinter table} Every element of $\s(G,\varphi)$ can be represented by a splinter table. \qed
\end{cor} 

Note that $(J_1, \ldots, J_n; g_1, \ldots, g_n; f)$ is a splinter table corresponding to the trivial element of $\s(G,\varphi)$ if and only if $g_1=\ldots = g_n=1$ and $f=1$. Therefore, combining this observation and Lemma \ref{L: splinter table}  with the fact that the word problem of $T(\varphi)$ is decidable (Corollary \ref{C: word problem T(Phi)}),  we immediately get the following.

\begin{lem}\label{L: word problem splinter} If the word problem for $G$ is decidable, then so is the word problem for $\s(G,\varphi)$. \qed
\end{lem}

\subsection{Left-orders} \label{S: left-orders for splinter}

Now let $G$ be left-ordered. We then define a left-order on $\s(G,\varphi)$ as follows, cf. \cite{neumann_embedding_1960,glass_ordered_1981}: 
 let $(t,f) \in \s(G,\varphi)$ be given as a splinter table $(J_1, \ldots, J_n; g_1, \ldots, g_n; f)$, see Corollary \ref{C: splinter table}. 
 If $t\not = 1$, then, without loss of generality, we let $J_1$ be the leftmost interval such that $t(J_1)\not = 1$ (i.e. $g_1\neq 1$).  We set $(t,f)>1$ if
  either $f>1$ in $\T(\varphi)$ or if $f=1$ and $g_1>1$ in $G$. As the action of $T(\varphi)$ on $\mathbb{R}$ is orientation-preserving, this defines a left-order on  $\s(G,\varphi)$. 
    
 We conclude: 
\begin{lem}\label{L: left-order splinter} If $G$ is left-ordered, then so is $\s(G,\varphi)$. The order on $\s(G,\varphi)$ continues the order on $G$. \qed
\end{lem}

\begin{lem}\label{L: computably left-order splinter} If $G$ is computably left-ordered, then so is $\s(G,\varphi)$. The order on $\s(G,\varphi)$ continues the order on $G$. 
\end{lem}
\begin{proof} We fix a computable left-order on $T(\varphi)$, see Corollary \ref{C: word problem T(Phi)}. 
Let $(t,f)\in \s(G,\varphi)$. First run the algorithm for the word problem, see Lemma \ref{L: word problem splinter}. If $(t,f)$ represents the identity stop. Otherwise,  check whether or not $f$ is positive, negative or the identity. In the first two cases, we are done. Otherwise, we can computably determine the leftmost (maximal) interval $J$ of the splinter representation of $(t,f)$ such that $t(J)\not =1$. Then we use that the left-order on $G$ is computable to determine whether or not $t(J)$ is positive or negative.
\end{proof}

\subsection{Embeddings into finitely generated groups} \label{S: embedding into 2 generated}

To conclude the proof of Theorem \ref{T: splinter}, we need the following result of \cite{darbinyan_group_2015}, see also \cite[Theorem 3]{arman_new} for more details on assertions (1)-(3).
\begin{thm}\label{L: embedding into 2 generated} Every countable  group $G$  embeds into a $2$--generated group $H$. In addition, 
\begin{enumerate}
\item if $G$ is computable, then $H$ has decidable word problem;
\item if  $G$ is left-ordered, then $H$ is left-ordered;
\item if $G$ is computably left-ordered, then the left order on $H$ is computable; 
\item the embedding of $G$ into $H$ is a Frattini embedding.
\end{enumerate}
Moreover, the left-order on $H$ continues the left-order on $G$. \qed
\end{thm}

Here we briefly explain why the embedding from \cite{darbinyan_group_2015} is a Frattini embedding.  

\begin{proof}[Proof of assertion (4) of Theorem \ref{L: embedding into 2 generated}]
 As it is shown in Section 2 of \cite{darbinyan_group_2015}, for $G=\{g_1, g_2, \ldots\}$, the embedding satisfying Theorem \ref{L: embedding into 2 generated} has the following properties: it embeds $G$ into a two generated subgroup $\langle c, s \rangle$ of the group $G \wr \langle z \rangle \wr \langle s \rangle$, where $\langle z \rangle$ and $ \langle s \rangle$ are infinite cyclic groups, such that $g_i$ goes to $[c, c^{s^{2^i - 1}}] \in (G \wr \langle z \rangle)^{\langle s \rangle}$. Moreover, the element $[c, c^{s^{2^i - 1}}]$, regarded as a map ${\langle s \rangle} \rightarrow G \wr \langle z \rangle$, has support $\subseteq \{1\}$. In addition, $[c, c^{s^{2^i - 1}}](1)$ is a map $\langle z \rangle \rightarrow G$ such that $([c, c^{s^{2^i - 1}}](1))(1)= g_i$.
 
 Now assume that for two elements $g, h \in G$, their images in $\langle c, s \rangle $ are conjugate. Let  $\bar{g}$ and $\bar{h}$ be the images of $g$ and $h$ in $\langle c, s \rangle$, respectively. In particular, $\bar{g}$ and $\bar{h}$ are elements of the form $[c, c^{s^{2^i - 1}}]$. Let $(f, s^n) \in \langle c, s \rangle \leq (G \wr \langle z \rangle)\wr{\langle s \rangle}$ be such that $(f, s^n) \bar{g} (f, s^n)^{-1} = \bar{h}$. Then we get
 $$f \bar{g}^{s^n} f^{-1} = \bar{h},$$
 which implies that $\supp(f \bar{g}^{s^n} f^{-1}) =\supp(\bar{g})=\{1\}$. On the other hand,  
 $$\supp(f \bar{g}^{s^n} f^{-1}) = \supp (\bar{g}^{s^n})=\{s^{-n}\}.$$ Therefore, $n=0$, hence $\bar{g}(1)$ is conjugate to $\bar{h}(1)$ in $G \wr \langle z \rangle$. Repeating this argument one more time with respect to the pair $\bar{g}(1), \bar{h}(1) \in G \wr \langle z \rangle$ and using the fact that $(\bar{g}(1))(1)=g$ and $ (\bar{h}(1))(1)=h$, we get that $g$ is conjugate to $h$ in $G$. Since $g, h \in G$ are arbitrarily chosen elements from $G$, we get that the embedding from \cite{darbinyan_group_2015} that satisfies Theorem \ref{L: embedding into 2 generated} is Frattini.
\end{proof}

\begin{proof}[Proof of Theorem \ref{T: splinter}]
 By Theorem \ref{L: embedding into 2 generated}, we assume without loss of generality that $G$ is $2$--generated. 

Let $H$ be the splinter group $\s(G,\varphi)$. Then $G$  embeds into $H$ and $H$ is finitely generated by Lemma \ref{L: embedding splinter}. Moreover, $H$ is perfect by Lemma \ref{L: perfect splinter}. Assertion (1) follows from Lemma \ref{L: word problem splinter}. Assertion (2) follows from Lemma \ref{L: left-order splinter}. Assertion (3) follows from Lemma \ref{L: computably left-order splinter}. Assertion (4) follows from Lemma \ref{lem-frattini-splinter gp}.
\end{proof}

\subsection{Thompson's Splinter group revisited}\label{S: Thompson splinter group}
We compare Definition \ref{D: Splinter group} with Thompson's definition of a splinter group \cite[Definition 2.1]{thompson_word_1980}.

Let $X$ be a Cantor set, whose elements are represented as infinite  sequences in letters $0$ and $1$. We note that the so called Thompson's group $V$ is exactly the group $\ft(X)$ defined in \cite[p. 405]{thompson_word_1980}. In fact, $V$ is an infinite finitely generated simple group that acts on $X$ \cite[Proposition 1.5, Corollary 1.9]{thompson_word_1980}.  

We note that the splinter group of \cite{thompson_word_1980} is the subgroup of  $G\wr_X V$ generated by $V$ and the functions $\overline{g}$ from $X$ to $G$ that take the value $g$ on all sequences starting with $01$, and the identity elsewhere. Lemma \ref{L: perfect splinter} corresponds to \cite[Theorem 2.3]{thompson_word_1980}, and Lemma \ref{L: word problem splinter} to \cite[Proposition 2.7]{thompson_word_1980}.

Unfortunately, the group $V$ and, hence, the splinter group of \cite{thompson_word_1980} are not left-orderable.

\section{Embeddings of left-ordered groups}

Let $J$ be a dyadic interval in $[0,1]$. 
 Since every left-ordered group embeds as a subgroup into $\homeo^+(J)$, we have the following. 

\begin{prop}\label{P: main 1} Every countable left-ordered group $G$ embeds into a finitely generated left-ordered group $H$. In addition, the order on $H$ continues the order on $G$.   
\end{prop}

\begin{proof}
Let $G$ be countable left-orderable group. Then, by Theorem \ref{T: splinter}, $G$ embeds into a finitely generated perfect left-orderable group $G_1$. On its own turn, since $G_1$ is left-orderable, it embeds into $\homeo^+\left(J\right)$. Let $G_2 \leq \homeo^+\left(J \right)$ such that $G_2$ is isomorphic to $G_1$. Let $H=T(G_2,\varphi)$, see Definition \ref{D: T(G,Phi)}.  
By Lemmas \ref{Lem: fin gen of the main group}, \ref{lem: the unifying lemma} and \ref{lem: the unifying lemma-left order}, $H$  has the required properties. 
\end{proof}

We now construct an embedding as in the previous proposition, that, in addition, is Frattini  and isometric (provided that $G$ is finitely generated), as required by Remark \ref{IR: Frattini}, and that has the  computability properties required by Theorem \ref{main2}. To achieve this, we modify the construction of Proposition \ref{P: computable left order} of  embeddings of left-ordered groups into $\homeo^+(J)$.

\label{S: modified}
 
\subsection{Dyadic parts} 
\begin{df} 
	For any $r=2^k\frac{p}{q} \in \mathbb{Q}\setminus\{0\}$, where $p$ and $q$ are odd integers, we call  $\{r\}_d := k$ the \emph{dyadic part} of $r$. 
\end{df}

We observe:
\begin{lem}
\label{lem-mini}
Let $c\neq 0, \lambda, x \in \mathbb{Q}$ and $\{\lambda\}_d+\{x\}_d \neq \{c\}_d$. Then $\{\lambda x +c \}_d = \min \{ \{\lambda \}_d +\{x\}_d, \{c\}_d \}$.  Also, $\{ \lambda x \}_d= \{\lambda\}_d+\{x\}_d $. \qed
\end{lem}

\begin{df}\label{D: strong permutation}
Let $I$ and $J$ be fixed intervals and $g: \mathbb{Q}_I \rightarrow \mathbb{Q}_J$ be a bijection. Then we say that $g$ is \emph{strongly permuting the dyadic parts} if the following two conditions take place.
\begin{enumerate}
    \item For each $m \in \mathbb{Z}$, there exists at most one $x\in \mathbb{Q}_I$ such that $\{x\}_d=m$ and $\{g(x)\}_d \leq 0 $;
    \item If $x_1 \neq x_2 \in \mathbb{Q}_I$ and $\{x_1\}_d=\{x_2\}_d$, then $\{g(x_1)\}_d \neq \{g(x_2)\}_d $.
\end{enumerate}
\end{df}
If $g$ is a bijection from $I$ to $J$, when we say that $g$ is strongly permuting the dyadic parts if it maps rational points to rational points and its restriction $g\mid_{\mathbb{Q}_I}: \mathbb{Q}_I \rightarrow \mathbb{Q}_J$ satisfies Definition \ref{D: strong permutation}.
\begin{rem}
\label{remark-strongly-dyadic-permutations}
If  $g: \mathbb{Q}_I \rightarrow \mathbb{Q}_J$ is strongly permuting the dyadic parts, then, for each $m \in \mathbb{Z}$, the set $\left\{ \{g(x)\}_d \mid x\in \mathbb{Q}_{I}, ~ \{x\}_d=m\right\}$ is unbounded from above.
\end{rem}

Let us consider, for $0< i\leqslant n$, 
\begin{itemize}
\item bijective dyadic maps $f_i: I_{i}\to J_i$ such that  $ f_i(x)= \lambda_i x + c_i$, where $\lambda_i$ is a power of $2$ and, for all $i \notin \{0, n\}$, $c_i\not=0$; and
\item bijective maps $g_i:J_i\to I_{i-1}$, whose restriction to $\mathbb{Q}_{J_i}$ strongly permutes the dyadic parts.
\end{itemize}

\begin{lem}
\label{lemma-main technical}  If $\Lambda = g_1f_1g_2f_2 \ldots g_nf_n$, then, for large enough $N \in \mathbb{N}$, the set $$\left\{ \{\Lambda(x)\}_d \mid x\in \mathbb{Q}_{I_n}, ~ \{x\}_d=N\right\}$$ is unbounded from above. In particular, $\Lambda\not = id$. 
\end{lem}
\begin{proof}
We will prove the lemma by induction on $n$. 

Let $n=1$. Then $\Lambda(x) = g_1(\lambda_1 x + c_1)$. If $c_1=0$ or $N >\{c_1 \}_d$ and $\{x_1\}_d=\{x_2\}_d=N$, then, by Lemma \ref{lem-mini}, $\{\lambda_1 x_1 + c_1\}_d=\{\lambda_1 x_2 + c_1\}_d$. The statement now follows as $g_1$ strongly permutes the dyadic parts (see Remark \ref{remark-strongly-dyadic-permutations}).

Next let  $n>1$. Then $\Lambda(x) = g_1 (\lambda_1 \Lambda_2(x)+c_1)$, where $\Lambda_2 = g_2f_2 \ldots g_n f_n$ and $c_1 \neq 0$. By inductive assumption, for any large enough $N$, there exists a sequence $\{x_i\}_{i=1}^{\infty}$ such that $\{x_i\}_d=N$ and $\lim_{i\rightarrow \infty}\{\Lambda_2(x_i)\}_d=\infty$. By Lemma \ref{lem-mini}, for any large enough index $i$, $\{\lambda_1 \Lambda_2(x_i) + c_1\}_d=\{c_1\}_d$, hence, the lemma follows as $g_1$ strongly permutes the dyadic parts.
\end{proof}

\subsection{ The modified dynamical realization}
Let $J$ be a fixed closed interval in $\mathbb{R}$ with non-empty interior. We prove: 
\begin{prop}
\label{prop-modified-dyn-rel}
Let $G$ be a countable group. 

If $G$ is left-orderable, then there is an embedding $\Psi: G \hookrightarrow \homeo^+(J)$ such that, for all $g\in G \setminus \{1\}$, the map $\Psi(g): J \rightarrow J$ is strongly permuting the dyadic parts and does not fix any rational interior point of $J$.

If $G$ is computably left-orderable, then, in addition, all the maps $\Psi(g)$ can be taken to be  computable.
\end{prop}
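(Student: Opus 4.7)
The plan is to imitate the dynamical realisation of Proposition \ref{P: computable left order}, but to replace the Cantor-style back-and-forth of Lemma \ref{Lem: Cantor's argument} by a more controlled back-and-forth that forces the resulting homeomorphisms to strongly permute dyadic parts.

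First, by Remark \ref{R: embedding in dense}, I would reduce to the case where the order on $G$ is dense and has no extremal elements, and, in the computable case, where the order on the given enumeration $G=\{g_1,g_2,\ldots\}$ is computable. I would also fix a recursive enumeration $\mathbb{Q}_J=\{q_1,q_2,\ldots\}$ such that both the natural order on $\mathbb{Q}_J$ and the map $q\mapsto\{q\}_d$ are computable. Then I would build an order-preserving bijection $\Psi:G\to\mathbb{Q}_J$ inductively, maintaining on the current finite partial function $\Psi:D\to Q$ the following two invariants:
\begin{itemize}
\item[(I1)] for every $g\in G\setminus\{1\}$ and every $m\in\mathbb{Z}$, the set $\{h\in D : gh\in D,\ \{\Psi(h)\}_d=m,\ \{\Psi(gh)\}_d\leqslant 0\}$ has at most one element;
\item[(I2)] for every $g\in G\setminus\{1\}$ and every pair of distinct $h_1,h_2\in D$ with $gh_1,gh_2\in D$ and $\{\Psi(h_1)\}_d=\{\Psi(h_2)\}_d$, one has $\{\Psi(gh_1)\}_d\neq\{\Psi(gh_2)\}_d$.
\end{itemize}
Once $D=G$, (I1) and (I2) become exactly conditions (1) and (2) of Definition \ref{D: strong permutation} for the bijection $\rho_G^{\Psi}(g):\mathbb{Q}_J\to\mathbb{Q}_J$ sending $\Psi(h)$ to $\Psi(gh)$.

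At a forward step, where the image $\Psi(g_i)$ is to be chosen in a prescribed open sub-interval of $J$, the invariants rule out only finitely many values of $\{\Psi(g_i)\}_d$; since any non-degenerate open interval of $\mathbb{R}$ contains infinitely many rationals with every prescribed dyadic part, such a $\Psi(g_i)$ always exists. At a backward step, where $\{q_j\}_d$ is forced on me, each potentially new violation of (I1) or (I2) takes the form of a ``parallelogram'' of elements of $G$ three of whose corners already lie in $D$; solving for the fourth corner in the group law shows that each such configuration rules out at most one element of $G$. Hence only finitely many elements of $G$ are forbidden, and, by density of the order on $G$, the order interval in which $\Psi^{-1}(q_j)$ must lie still contains infinitely many candidates. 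Passing to the limit gives an order-preserving bijection $\Psi:G\to\mathbb{Q}_J$ satisfying (I1)-(I2) with $D=G$.

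Finally, I would define $\rho_G^{\Psi}$ as in Definition \ref{def-dyn-rel-embedding} and extend it continuously to $J$. By Lemmas \ref{L: embedding computable left order} and \ref{L: dynamical realisation no fixed points}, this gives an embedding $G\hookrightarrow\homeo^+(J)$ whose non-trivial elements fix no rational point of $\mathbb{Q}_J$, and by (I1)-(I2), every $\rho_G^{\Psi}(g)$ with $g\neq 1$ strongly permutes the dyadic parts. In the computable setting, all the data used in the back-and-forth (the order on $G$, the group operation, the order on $\mathbb{Q}_J$ and the function $q\mapsto\{q\}_d$) are computable, so each step is effective and each $\rho_G^{\Psi}(g)$ is computable. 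The main obstacle is the backward step: since $\{q_j\}_d$ is already fixed, I must show that the finitely many new parallelogram constraints never exhaust the densely ordered order-interval of candidates for $\Psi^{-1}(q_j)$, and this rests precisely on the observation that each bad parallelogram is pinned down by three previously placed elements and hence rules out only a single element of $G$.
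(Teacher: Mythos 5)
Your plan has the same overall shape as the paper's argument (Lemma~\ref{Lem: Cantor's argument modified} plus its application in the proof of Proposition~\ref{prop-modified-dyn-rel}): reduce to a dense order via Remark~\ref{R: embedding in dense}, run a controlled back-and-forth producing an order-preserving bijection $G\to\mathbb{Q}_J$, and read off $\Psi$ via Definition~\ref{def-dyn-rel-embedding}. The difference is in the invariants maintained during the back-and-forth. The paper maintains two \emph{linear} constraints: at the odd (forward) step the new image has a fresh dyadic part in $\mathbb{N}$; at the even (backward) step the new group element $g_{i_{2n+2}}$ avoids the finite set $\{g_{i_k}g_{i_l}^{-1}g_{i_m}\}$. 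Strong permutation is then derived \emph{a posteriori} by a case analysis on which index is largest. You instead try to maintain the strong-permutation conditions (I1)--(I2) directly, and this is where the gap appears.

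Your key claim at the backward step is that every new violation of (I1) or (I2) ``takes the form of a parallelogram of elements of $G$ three of whose corners already lie in $D$,'' so that solving for the fourth corner forbids only one candidate for $\gamma:=\Psi^{-1}(q_j)$. This is false: $\gamma$ can occupy \emph{two} corners. Concretely, if in a violating configuration $h_1=\gamma$ and $g h_2=\gamma$ with $h_2\in D$, then $g=\gamma h_2^{-1}$ and the remaining corner is $g h_1=\gamma h_2^{-1}\gamma$, which must lie in $D$, say equal to $d$. Only two corners ($h_2$ and $d$) are pinned by $D$, and the constraint on $\gamma$ is the \emph{quadratic} equation $\gamma h_2^{-1}\gamma=d$. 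In a general left-orderable group this can have infinitely many solutions: in the Klein bottle group $\langle a,b\mid bab^{-1}=a^{-1}\rangle$ one has $(a^n b)\,a\,(a^n b)=a^{-1}b^2$ for every $n\in\mathbb{Z}$. So ``at most one forbidden $\gamma$ per configuration'' is not true, and the claim that only finitely many candidates are ruled out at the backward step is not justified. The construction is salvageable --- after passing to $G\times\mathbb{Q}$ as you do, the $\mathbb{Q}$-coordinate of any solution $(\gamma,r)(u_0,s_0)(\gamma,r)=(d_0,t_0)$ is uniquely $r=(t_0-s_0)/2$, so no quadratic solution set can exhaust an order-interval of $G\times\mathbb{Q}$ --- but you would need to say this. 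The paper sidesteps the whole issue by using the weaker linear invariants above, which is why Lemma~\ref{Lem: Cantor's argument modified} is stated the way it is and why its proof is immediate, with the real work pushed into the index-chasing in the proof of Proposition~\ref{prop-modified-dyn-rel}.
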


 As in the proof of Proposition \ref{P: computable left order}, we fix a recursive enumeration $\mathbb{Q}_J=\{q_0, q_1, \ldots \}$ such that the natural order on $\mathbb{Q}_J$ is computable with respect to this enumeration. 

We first strengthen Lemma \ref{Lem: Cantor's argument} that states that there is an order preserving bijection $\Phi: G \to \mathbb{Q}_J$.

\begin{lem}
\label{Lem: Cantor's argument modified} If $G$ is enumerated and densly left-ordered, then
there is an enumeration $G=\{g_{i_1}, g_{i_2}, \ldots \}$ and an order preserving bijection $\Theta: G \to \mathbb{Q}_J$ such that 
\begin{enumerate}
\item For odd $j$, $\{\Theta(g_{i_j})\}_d \in \mathbb{N}$ and  $\{\Theta(g_{i_j})\}_d \not\in \left\{ \, \{\Theta(g_{i_k})\}_d \mid 1\leqslant k<j\right\}$;
\item For even $j$, $g_{i_j}\not \in \left\{ g_{i_k}g_{i_l}^{-1}g_{i_m}\mid 1\leqslant k,l,m<j \right\}$;
\item If $G$ is computably left-ordered, then the enumeration $G=\{g_{i_1}, g_{i_2}, \ldots \}$  and the map $j \mapsto \Theta(g_{i_j})$ are computable.
\end{enumerate}

\end{lem}
\begin{proof}
Let $G= \{1=g_0, g_1, g_2, \ldots \}$ and $\mathbb{Q}_J = \{0=r_0, r_1, r_2, \ldots \}$ be fixed (recursive)  enumerations. We define $\Theta: g_0 \mapsto r_0$ and $\Theta: g_{i_j} \mapsto r_{i_j}$, where $(g_{i_1}, g_{i_2}, \ldots )$ and $(r_{i_1}, r_{i_2}, \ldots )$ are permutations of  $(g_{1}, g_{2}, \ldots )$ and $(r_{1}, r_{2}, \ldots )$, respectively, defined recursively as follows. 

\emph{Step $2n+1$.} Let $G_{2n}=\{ g_{i_1}, \ldots, g_{i_{2n}}\}$ and $Q_{2n}=\{r_{i_1}, \ldots, r_{i_{2n}}\}$ be already defined. Let us define $g_{i_{2n+1}}$ as the element of the smallest index that is not in $G_{2n}$. Suppose that $g_{i_s}<g_{i_{2n+1}}< g_{i_t}$ and that no  element from $G_{2n}$ is in between $g_{i_s}$ and $ g_{i_t}$. Then define $r_{i_{2n+1}}\in \mathbb{Q}_J$ to be of the smallest index such that 
\begin{itemize}
    \item[(O1)] $r_{i_{2n+1}} \notin Q_{2n}$ and $r_{i_s} < r_{i_{2n+1}}<r_{i_t}$,
    \item[(O2)] $\{r_{i_{2n+1}} \}_d \in \mathbb{N}$ and $\{r_{i_{2n+1}} \}_d \notin \{  \{r_{i_{j}} \}_d \mid 1\leq j \leq 2n \}$.
\end{itemize}

\emph{Step $2n+2$.} Let $G_{2n+1}:=\{ g_{i_1}, \ldots, g_{i_{2n+1}}\}$ and $Q_{2n+1}=\{r_{i_1}, \ldots, r_{i_{2n+1}}\}$ be already defined. Let us define $r_{i_{2n+2}}$ as the rational of the smallest index that is not in $Q_{2n+1}$. Suppose that $r_{i_s}<r_{i_{2n+2}}< r_{i_t}$ and that no element from $Q_{2n+1}$ is in between $r_{i_s}$ and $ r_{i_t}$. Then let us define $g_{i_{2n+2}}\in G$ as the element of the smallest index such that 
\begin{itemize}
     \item[(E1)] $g_{i_{2n+2}} \notin G_{2n+1}$ and $g_{i_s} < g_{i_{2n+2}}<g_{i_t}$, and
    \item[(E2)] $g_{i_{2n+2}} \notin \{ g_{i_k} g_{i_l}^{-1} g_{i_m} \mid 1\leq k, l, m \leq 2n+1 \}$.
\end{itemize} 

The bijection $\Theta$ defined this way is order preserving by (O1) and (E1). Condition (O2) yields assertion (1), and (E2) yields assertion (2). Finally, as the procedure is algorithmic, we also obtain assertion (3).
\end{proof}

\begin{proof}[Proof of Proposition \ref{prop-modified-dyn-rel}] By Remark \ref{R: embedding in dense}, we may assume that the order on $G$ is dense.
Let the enumeration  $G=\{g_0, g_1, \ldots \}$ and $\Theta: G \to \mathbb{Q}_J$ satisfy the assertions of Lemma \ref{Lem: Cantor's argument modified}. Then $\Theta: G \to \mathbb{Q}_J$ induces the embedding $\rho_G^{\Theta}: G \hookrightarrow \homeo^+(J)$ according to $\rho_G^{\Theta}(g)(\Theta(h))= \Theta(gh)$ for $g, h \in G$.  Denote $\Psi=\rho_G^{\Theta}$. 
 Let  $h \in G\setminus \{1\}$. By Lemmas \ref{L: embedding computable left order} and \ref{L: dynamical realisation no fixed points}, we only need to show that $\Psi(h)$ is strongly permuting the dyadic parts.  
 
 To this end, we enumerate $\mathbb{Q}_J$ such that $\Theta(g_i)=r_i$ and let $r_i \neq r_j \in \mathbb{Q}_J$. We define $r_k=\Psi(h)(r_i)=\Theta(hg_i)$ and $r_l=\Psi(h)(r_j)=\Theta(hg_j)$, so that $g_k=hg_i$ and $g_l=hg_j$. 

We first show property (1) of Definition \ref{D: strong permutation}. 
By contradiction, assume that there exist $i\neq j$ such that $\{r_i\}_d = \{r_j\}_d$ and $\{r_k\}_d, \{r_l\}_d \notin \mathbb{N} $. 
Since $\{r_k\}_d, \{r_l\}_d \notin \mathbb{N}$, the indices $k$ and $l$ are even. 
 Then, since $g_k=hg_i = (hg_j)(g_j^{-1})(g_i)$ and $g_l=h g_j = (hg_i) (g_i^{-1})(g_j)$, by (2) of Lemma \ref{Lem: Cantor's argument modified}, the largest index is among $i$ or $j$. Let $j>i$. Then, since $\{r_i\}_d = \{r_j\}_d$, we get the index $j$ is even. Since  $g_j = g_i(hg_i)^{-1} (hg_j)$, again by (2) of Lemma \ref{Lem: Cantor's argument modified}, we get a contradiction, which yields the claim. 

Next, we prove property (2) of Definition \ref{D: strong permutation}. By contradiction, assume that there exist $r_i \neq r_j \in \mathbb{Q}_J$ such that $\{r_i\}_d=\{r_j\}_d$ and suppose that $\{r_l\}_d=\{r_k\}_d$. 
Without loss of generality,  $l>i,j,k$ (if, say, $j>i,k,l$, then instead of $h$ we could consider $h^{-1}$). 
Then, since $\{r_k\}_d=\{r_l\}_d$, by (1) of Lemma \ref{Lem: Cantor's argument modified}, $l$ has to be even. 
Therefore, since $\Theta(hg_j)=r_l$ and $l$ is even, by (2) of Lemma \ref{Lem: Cantor's argument modified}, $hg_j\not \in \left\{ g_mg_n^{-1}g_p\mid 1\leqslant m,n,p<j \right\}$. On the other hand, since $l>i,j,k$, we get $hg_j=(hg_i)(g_i^{-1})(g_j)$, a contradiction. 

This completes the proof of Proposition \ref{prop-modified-dyn-rel}.
\end{proof}

\subsection{The embedding theorems}

Let $G$ be countable left-orderable group. Then, by Theorem \ref{T: splinter}, $G$ embeds into a finitely generated perfect left-orderable group $G_1$. Moreover, this embedding is a Frattini embedding. 

Let $J:=[1/4,1/2]$. Since $G_1$ is left-orderable, there is an embedding $\Psi: G_1\hookrightarrow \homeo^+\left(J\right)$. Let $G_2 =\Psi(G_1)$.
By Proposition \ref{prop-modified-dyn-rel}, we can assume that the non-trivial elements of $G_2\subseteq \homeo^+\left(J \right)$ strongly permute the dyadic parts and do not fix any rational interior point of $J$. 

For the definition of $G_2$--dyadic maps, see Definition \ref{G-dyadic}.

\begin{lem}\label{L: no G dyadic map} 
Let $\Lambda$ be a $G_2$--dyadic map. If $G_2$ has decidable word problem, then there is an algorithm to decide whether or not $\Lambda=id.$
\end{lem}

\begin{proof} Let  $n>0$ and, for all $0\leqslant i \leqslant n $, let ${J}_i\subset J$ and let $g_i:{J_i} \rightarrow {I_{i-1}}$ be the restriction of an element of $G_2$ such that $g_i\not=id$. Moreover, let $f_i: {{I}_i} \rightarrow {{J}_i} $, given by $ f_i(x)= \lambda_i x + c_i$, be dyadic maps such that $f_i\not=id$.  

Since,  for all $0<i<n$, $J_i\subset [1/4,1/2]$ and, by definition, $\lambda_i$ is  a power of $2$, we get that $c_i\not=0$ for $0<i<n$.  
Then, by Lemma \ref{lemma-word problem in T-H-Phi}, $\Lambda:= g_1f_1g_2f_2 \ldots g_nf_n\not =id$.

If $n=1$ and $f_1=id$, then $\Lambda =g_1 \in G$. Then we decide using the algorithm for  the word problem in $G$. 
\end{proof}

Combining Lemmas \ref{lem-isometry-splinter gp} and \ref{L: no G dyadic map}, we also conclude the following. 
\begin{lem}\label{L: isometry-1} The embedding $ G_1  \hookrightarrow T(G_2,\varphi)$ is isometric.\qed \end{lem}

\begin{lem}\label{L: Frattini left order} The embedding $G_1 \hookrightarrow T(G_2,\varphi)$ is a Frattini embedding.
\end{lem} 

\begin{proof} Let $h,g \in G_2$ and $t\in T(G_2,\varphi)$. We assume that $ht^{-1}gt=1$. 

We represent $t$ by a canonical chart representation $(C_i\times I_i, C_j\times J_i, t_i)$ such that $\bigcup_{i}J_i=[0,1]$; and represent $t^{-1}$ by $(C_i\times J_i, C_i\times I_i, t_i)$. We recall that $t_i(I_i)=J_i$. 

Let $k$ be a index such that  $1/2$ is in the closure of $J_k$ and such that (after applying a chart refinement if necessary)  $J_k\subseteq J$. As $g$ is fixing $1/2$, there is $J_k'\subseteq J_k$ such that $g(J_k')\subseteq  J_k$ and such that  $1/2$ is in the closure of $J_k'$. We let $I_k'=t_k^{-1}(J_k')$ and $I_k''= t_k^{-1}gt_k(I_k')$.

Then the triple $(C\times I_k', C \times I_k'', t_k^{-1}gt_k)$ is in a chart representation of $t^{-1}gt$.  Up to applying the algorithm of Lemma \ref{L: composition 1} to this chart representation, we may assume that $I_k''$ is in $[0,1]$. Moreover, up to applying a chart refinement if necessary, we may assume that either $I_k''\cap \mathfrak{J}$ is empty or consists of one point ($1/4$ or $1/2$), or $I_k''\subseteq \mathfrak{J}$.

If $I_k''\cap J$ is empty or consists of one point, then $(C\times I_k', C \times I_k'', t_k^{-1}gt_k)$ is in a chart representation of $ht^{-1}gt$. Thus $t_k^{-1}gt_k=id$ on $I_k'$ by Lemma \ref{L: id}. This implies that $g$ acts as the identity on $J_k'$.  Since non-trivial elements of $G_2$ do not fix any rational interior points of $J$, the element $g=1$.

Otherwise, the triple $(C\times I_k', C \times h(I_k''), ht_k^{-1}gt_k)$ is in a chart representation of $ht^{-1}gt$. Thus $ht_k^{-1}gt_k=id$ on $I_k'$ by Lemma \ref{L: id}. Then $h(I_k'')=I_k'$. This is only possible if $I_k'\subseteq J$. But then Lemma \ref{L: no G dyadic map}, implies that $t_k$ acts as an element of $G_2$.  Since non-trivial elements of $G_2$ do not fix any rational interior points of $J$, this implies that $g$ and $h$ are conjugate in $G_1$. 
\end{proof}

We can now conclude Theorems \ref{main1}, \ref{main2} and \ref{theorem-bludov-glass}. 

\begin{proof}[Proof of Theorems 1,  2 and Remark \ref{IR: Frattini}] We let $H=T(G_2,\varphi)$.

The group $H$ is finitely generated left-orderable and simple by Lemmas \ref{Lem: fin gen of the main group}, \ref{lem: the unifying lemma-left order} and \ref{lem: the unifying lemma}. By construction, $G$ embeds into $H$, and the order on $H$ extends the order on $G$.  Moreover, by Lemma \ref{L: Frattini left order}, the embedding of $G$ is a Frattini embedding. 

If $G$ is computably left-ordered, we may in addition assume that $G_1$ is computably left-ordered, see Theorem \ref{T: splinter}. By Proposition \ref{prop-modified-dyn-rel}, for all $g\in G_1$, $\Psi(g)$ is computable. Therefore, by Lemma \ref{lem: computable left-orders}, the positive cone of $H$ is recursively enumerable. Moreover, by Lemma \ref{L: no G dyadic map} and Lemma \ref{lemma-word problem in T-H-Phi}, the group $H$ has decidable word problem. By Lemma \ref{lem: computable left-orders}, the left-order on $H$ is computable. 
\end{proof}

\begin{proof}[Proof of Theorem \ref{theorem-bludov-glass}] Let $G$ be finitely generated left-orderable group with a recursively enumerated positive cone. If $G$ has decidable word problem, then the left-order on $G$ is computable by Lemma \ref{lem: computable left-orders}. Then Theorem \ref{main2} implies that $G$ embeds into a finitely generated computably left-ordered simple group $H$. In particular, the word problem in $H$ is decidable. Thus $H$ can be defined by a recursively enumerable set of relations. By \cite[Theorem D]{bludov_word_2009}, $H$  embeds into a left-orderable finitely presented group. 

On the other hand, if $H$ is a finitely generated simple subgroup of a finitely presented group, then it has decidable word problem (see \cite[Theorem 3.6]{lyndon_schupp}). Therefore, $G$ has decidable word problem as well. 
\end{proof}

\section{Embeddings of computable groups}
\label{S: Thompson's theorem}

In this section we prove Theorem \ref{T: main 4}, the isometric version of Thompson's theorem \cite{thompson_word_1980}. In Appendix we present yet another proof of Theorem \ref{T: main 4} that, using the setting of our paper, mimics the original idea of \cite{thompson_word_1980}.

 \begin{thm}\label{T: thompson-1} Every computable group $G$ Frattini embeds into a finitely generated simple group $H$ with decidable word problem. If $G$ is finitely generated, then the embedding is isometric.
\end{thm}

\begin{rem}
The original statement \cite{thompson_word_1980} is for finitely generated groups, but finite generation can be replaced by computability of $G$ due to Theorem \ref{L: embedding into 2 generated}. 
\end{rem}

\subsection{The embedding construction}

Let $G$ be a computable group. By Theorem \ref{T: splinter}, $G$ embeds into a finitely generated perfect group $G_1$ with decidable word problem (if $G$ is finitely generated, this claim also follows from \cite[\S 2]{thompson_word_1980}). 

Let $G_1=\{ g^{(1)}, g^{(2)}, \ldots \}$  be enumerated so that $\mathfrak{m}: \mathbb{N} \times \mathbb{N} \rightarrow \mathbb{N}$,  defined as $\mathfrak{m}((i, j)) = k$ if $g^{(i)} g^{(j)} =g^{(k)}$, is computable. By Remark \ref{R: computable group and word problem} the existence of such $\mathfrak{m}$ is equivalent to decidability of the word problem. 

Let us fix two recursively enumerated recursive sets of dyadic numbers $\{ x_1, x_2, \ldots \}$ and $\{ y_1, y_2, \ldots \}$ such that the following takes place

\begin{enumerate}
	\item $0<x_1< y_1<x_2< y_2<  \ldots < \frac{1}{3}$,
	\item $x_i$ and $y_i$ are of the form $\frac{m}{2^n}$ and  $\frac{m+1}{2^n}$, respectively,
	\item $\lim_{i\rightarrow \infty} x_i = \frac{1}{3}$.
\end{enumerate}

Let us denote $D_i=[x_i, y_i]$ and $\mathfrak{J} = (0, \frac{1}{3}] \supset \sqcup_{i=1}^{\infty} D_i$.

For every $l\in \mathbb{N}$, let $\xi_l: \mathfrak{J} \rightarrow  \mathfrak{J}$ be such that, for every $k\in \mathbb{N}$, it is an affine map from  $D_k$ onto $D_{\mathfrak{m}(l,k)}$ and that is identity outside of $\cup_{i=1}^{\infty} D_i$. In particular, the map $\xi_l: D_k \rightarrow D_{\mathfrak{m}(l,k)}$ is dyadic.
\begin{rem}
	\label{rem-dyadic-action}
	 The maps $\xi_l: \mathfrak{J} \rightarrow  \mathfrak{J}$, $l\in \mathbb{N}$, are computable and continuous at non-dyadic points. Also, they have finitely many (dyadic) breakpoints outside of any open neighborhood of $1/3$.
\end{rem}

Let us define $\lambda: G_1 \rightarrow \mathcal{C}(\mathfrak{J})$  by $\lambda(g^{(l)})=\xi_l$, for all $l\in \mathbb{N}$. 

\begin{rem}\label{R: thompson} The map $\lambda$ is an embedding of $G_1$ into computable maps in  $\mathcal{C}(\mathfrak{J})$. 
\end{rem}
Let $\Lambda= g_1f_1g_2f_2 \ldots g_nf_n: \mathcal{I}_n \rightarrow \mathcal{I}_0$, where $f_i: {\mathcal{I}_i} \rightarrow {\mathcal{J}_i} $ and $g_i: \mathcal{J}_i \rightarrow \mathcal{I}_{i-1}$, be a $G_2$-dyadic map as in Definition \ref{G-dyadic}. Recall that in particular we have $\mathcal{J}_i \subseteq \mathfrak{J}=(0, \frac{1}{3}]$ for $1\leq i \leq n$.  We say that $\Lambda$ is a \emph{special} $G_2$-dyadic map if for each $1\leq i \leq n$ we have 
 $1/3 \in \overline{\mathcal{J}}_i$ (closure of $\mathcal{J}_i$).  Correspondingly, we say that a chart of type (II), see Definition \ref{D: types and canonical representations}, is $\emph{special}$ if the local representation in this chart is of the form $f_0\Lambda$, where $\Lambda$ is a special $G_2$--dyadic map.

The following lemma is a direct consequence of Remark \ref{rem-dyadic-action} and of the fact that the maps $f_i$, $g_i$, $1\leq i \leq n$, are computable.
\begin{lem}
	\label{lemma-easy-observation}
	There exist a finite  collection of intervals $K_1, K_2, \ldots, K_s \subseteq (0, \frac{1}{3}]$ such that $\Lambda|_{K_i \cap \mathcal{I}_0}$ is a special $G_2$-dyadic map. Moreover, such intervals $K_1, K_2, \ldots, K_s $ can be found algorithmically. \qed
\end{lem}

\begin{lem}
\label{lem-property-of-dyadic-maps}
	If $h: I \rightarrow J$ is a surjective dyadic map such that $1/3$ is in the closures of $I$ and ${J}$, and $ {I}, {J} \subseteq (0, \frac{1}{3}]$, then $h$ is the identity map.
	\end{lem}
\begin{proof}
	The lemma follows from the fact that $\frac{1}{3}$ is non-dyadic.
\end{proof}
By Lemma \ref{lem-property-of-dyadic-maps}, we have: 
\begin{cor}\label{C: special G dyadic} Special local representations are of the form $f_0g_1f_1$. In particular, if a special $G_2$-dyadic map or a special chart of type (II) fixes $1/3$, then it acts as an element of $G_2$. \qed
\end{cor}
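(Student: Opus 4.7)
The plan is to exploit the specialness of $\Lambda$ together with the non-dyadicity of $1/3$ (Lemma~\ref{lem-property-of-dyadic-maps}) to force the composition $\Lambda = g_1 f_1 g_2 f_2 \cdots g_n f_n$ to collapse to the single pair $n=1$, and then to run the same argument once more in the fixed-point case.

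For the first assertion, I would show $n = 1$ by arguing that each ``bridge'' $f_i$ with $1 \leqslant i \leqslant n-1$ must be the identity, contradicting Definition~\ref{G-dyadic}. The map $f_i : \mathcal{I}_i \to \mathcal{J}_i$ is a surjective dyadic map with $\mathcal{J}_i \subseteq (0, 1/3]$ and $1/3 \in \overline{\mathcal{J}}_i$ by specialness. For the domain, I would note that $\mathcal{I}_i = g_{i+1}(\mathcal{J}_{i+1})$ lies in $\mathfrak{J}$ because $g_{i+1}$ is the restriction of some $\xi_l \in G_2$ and $\xi_l$ preserves $\mathfrak{J}$, and that $1/3 \in \overline{\mathcal{I}}_i$ because $\xi_l$ acts as the identity on $\mathfrak{J} \setminus \bigsqcup_k D_k$ and, by $y_k \to 1/3$, this complement accumulates at $1/3$ inside $\mathcal{J}_{i+1}$. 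Lemma~\ref{lem-property-of-dyadic-maps} then gives $f_i = id$, contradicting $f_i \neq id$ for $i \neq n$, unless the index range $\{1,\ldots,n-1\}$ is empty. Hence $n=1$, so $\Lambda = g_1 f_1$ and the local representation is $f_0 g_1 f_1$.

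For the ``in particular'' clause, suppose the local representation $f_0 g_1 f_1$ fixes $1/3$. Since $1/3 \notin \bigsqcup_k D_k$, every element of $G_2$ fixes $1/3$, so $g_1(1/3) = 1/3$. Unwinding the fixed-point equation from the outside in yields $f_0(1/3) = 1/3$ and $f_1(1/3) = 1/3$. The closure analysis above shows that both $f_0$ and $f_1$ have domain and codomain in $(0,1/3]$ with $1/3$ lying in each closure (because $f_1$ maps into $\mathcal{J}_1$ and $f_0^{-1}$ maps into $\mathcal{I}_0 = g_1(\mathcal{J}_1)$), so Lemma~\ref{lem-property-of-dyadic-maps} applies twice to give $f_0 = f_1 = id$. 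The local representation therefore coincides with $g_1$, i.e.\ with an element of $G_2$. The identical reasoning applied to $\Lambda = g_1 f_1$ alone handles the variant for a special $G_2$-dyadic map.

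The main obstacle is the closure-inheritance step $1/3 \in \overline{\mathcal{J}}_{i+1} \Rightarrow 1/3 \in \overline{\mathcal{I}}_i$: the elements $\xi_l \in G_2$ can permute infinitely many $D_k$ accumulating at $1/3$, so crude continuity is unavailable and one cannot simply say ``images of sets with $1/3$ in the closure still have $1/3$ in the closure''. The argument relies crucially on the feature of the construction that $\xi_l$ is the identity precisely on the gaps between the $D_k$, together with $y_k \to 1/3$, so that the fixed-point set of each $\xi_l$ already accumulates at $1/3$ from within every $\mathcal{J}_{i+1}$.
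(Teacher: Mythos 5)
Your argument for the first assertion ($n=1$) is essentially the paper's intended reasoning and it is correct. One remark on the closure-inheritance step you single out as the "main obstacle": you do not actually need the fixed-point-set argument, because Remark~\ref{rem-dyadic-action} already records that the maps $\xi_l$ are continuous at non-dyadic points of $\mathfrak{J}$, and $1/3$ is non-dyadic. The point is that $\mathfrak{m}(l,\cdot)$ is a bijection of $\mathbb{N}$, so large indices go to large indices, which forces $\xi_l(x)\to 1/3$ as $x\to 1/3$; hence $1/3\in\overline{\mathcal{J}_{i+1}}$ immediately gives $1/3\in\overline{g_{i+1}(\mathcal{J}_{i+1})}=\overline{\mathcal{I}_i}$. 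Your alternative route through the gaps $(y_k,x_{k+1})$ is valid and reaches the same conclusion, but the concern that "crude continuity is unavailable" is unfounded.

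For the "in particular" clause there is a genuine gap. The step "unwinding the fixed-point equation from the outside in yields $f_0(1/3)=1/3$ and $f_1(1/3)=1/3$" is not a valid deduction from $f_0(g_1(f_1(1/3)))=1/3$ alone, and in any case is not what Lemma~\ref{lem-property-of-dyadic-maps} asks for: the lemma requires that both the domain and codomain of each dyadic piece lie in $(0,1/3]$ with $1/3$ in both closures, not that the map has $1/3$ as a fixed point. Your parenthetical only verifies two of the four needed conditions, namely the codomain of $f_1$ (this is $\mathcal{J}_1$, covered by specialness) and the domain of $f_0$ (this is $\mathcal{I}_0=g_1(\mathcal{J}_1)$, covered by closure inheritance). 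You say nothing about the domain of $f_1$ (which is the source interval of the whole chart) or the codomain of $f_0$ (which is the target interval of the chart). These must themselves be subintervals of $\mathfrak{J}$ with $1/3$ in the closure; that is exactly the operative content of "fixes $1/3$" in the paper's applications (e.g.\ in Lemma~\ref{lem: Frattini arman embedding}, where the relevant chart is arranged via refinements to carry a subinterval of $\mathfrak{J}$ accumulating at $1/3$ to another such subinterval). Once those two hypotheses are made explicit, the argument closes: Lemma~\ref{lem-property-of-dyadic-maps} applied to $f_1$ and then to $f_0$ gives $f_0=f_1=id$, so the local representation is $g_1\in G_2$, with no need for the unwinding step at all.
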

%

Since the word problem for $G_2$ is decidable, this implies that  there exists an algorithm that decides whether or not a special $G_2$-dyadic map represents the identity map. This, combined with Lemmas \ref{lemma-easy-observation} and \ref{lemma-word problem in T-H-Phi}, leads to the following corollary.

\begin{cor}
\label{cor-wp-thompson-a}
		The word problem in $T(G_2, \varphi)$ is decidable. \qed
\end{cor}

	\subsection{Frattini property}
	The embedding $G \hookrightarrow T(G_2, \varphi)$ is Frattini.
	
	\begin{lem}\label{lem: Frattini arman embedding} The embedding $G_1 \hookrightarrow T(G_2,\varphi)$ is a Frattini embedding.
\end{lem} 
We adapt the proof of Lemma \ref{L: Frattini left order}.
\begin{proof}
Let $h,g \in G_2$ and $t\in T(G_2,\varphi)$. We assume that $ht^{-1}gt=1$. 

We represent $t$ by a canonical chart representation $\{(C_i\times I_i, C_j\times J_i, t_i)\}$ such that $\bigcup_{i}J_i=[0,1]$; and represent $t^{-1}$ by $\{(C_i\times J_i, C_i\times I_i, t_i^{-1})\}$. We recall that $t_i(I_i)=J_i$. 

Let $k$ be a index such that  $1/3$ is in the closure of $J_k$ and such that (after applying a chart refinement if necessary)  $J_k\subseteq \mathfrak{J}$. As $g$ is fixing $1/3$, there is $J_k'\subseteq J_k$ such that $g(J_k')\subseteq  J_k$ and such that  $1/3$ is in the closure of $J_k'$. We let $I_k'=t_k^{-1}(J_k')$ and $I_k''= t_k^{-1}gt_k(I_k')$. 

Then the triple $(C\times I_k', C \times I_k'', t_k^{-1}gt_k)$ is in a chart representation of $t^{-1}gt$.  Up to applying the algorithm of Lemma \ref{L: composition 1} to this chart representation, we may assume that $I_k''$ is in $[0,1]$. Moreover, up to applying a chart refinement if necessary, we may assume that either $I_k''\cap \mathfrak{J}$ is empty or consists of one point $1/3$, or $I_k''\subseteq \mathfrak{J}$.

If $I_k''\cap \mathfrak{J}$ is empty or consists of one point $1/3$, then $(C\times I_k', C \times I_k'', t_k^{-1}gt_k)$ is in a chart representation of $ht^{-1}gt$. Thus $t_k^{-1}gt_k=id$ on $I_k'$ by Lemma \ref{L: id}. This implies that $g$ acts as the identity on $J_k'$.  As $1/3$ is in the closure of $J_k'$, this implies that $g=1$.

Otherwise, the triple $(C\times I_k', C \times h(I_k''), ht_k^{-1}gt_k)$ is in a chart representation of $ht^{-1}gt$.  Thus $ht_k^{-1}gt_k=id$ on $I_k'$ by Lemma \ref{L: id}. But then $t_kht_k^{-1}g:J_k'\to J_k'$ has to be the identity as well. As $g$ is fixing $1/3$, $t_kht_k^{-1}$ has to fix $1/3$. 

If $t_k$ was dyadic (i.e. of type (I)), it would have to fix $1/3$, so that $t_k=id$ by Lemma \ref{lem-property-of-dyadic-maps}. If $t_k$ is of type (II), we may assume that $t_k$ is special, see Lemma \ref{lemma-easy-observation}. Then, by Corollary \ref{C: special G dyadic}, $t_k$ acts as an element of $G_2$.  

Thus $g$ and $h$ are conjugated by elements of $G_2$. This implies that $g$ and $h$ are conjugated in $G_1$. 
	\end{proof}
	
	Combining Lemma \ref{lem: Frattini arman embedding} with Lemma \ref{lem-frattini-splinter gp}, we obtain:
\begin{cor}
\label{corollary-frattini-thompson-a}
    The embedding $G \hookrightarrow T(G_2, \varphi)$ is Frattini. \qed
\end{cor}

	\begin{lem}\label{lem: isometry arman embedding} The embedding $G_1 \hookrightarrow T(G_2,\varphi)$ is an isometric embedding.
\end{lem} 	
\begin{proof} We fix a finite generating set $X$ for $G_1$, and denote the generating set of $T(\varphi)$ given by Lemma \ref{L: finite generation  of T(Phi)} by $Y$. We denote the union of the bijective images of $X$ and $Y$ in $T(G_2,\varphi)$ by $Z$ and recall that $Z$ generates $T(G_2,\varphi)$. We assume that all generating sets are symmetric. We denote by $|.|_A$ the word metric with respect to the generating set $A$.   

Let $g\in G_2$ and let $t=z_1\cdots z_m$ be a reduced word in the alphabet $Z$ that represents $g^{-1}\in T(G_2,\Phi)$, so that $tg=1$. In addition, we assume that $m=|g|_Z$.  We represent every generator $z_i$ by a canonical chart representation, see Lemma \ref{L: canonical generators}. Lemma \ref{lem-computablility-of-chart-rep} then gives a canonical chart representation  $(C_i\times I_i, C_i\times J_i, t_i)$ for $t$. Recall that the maps $t_i$ are compositions $t_i=h_1\cdots h_{m_i}$, where each map $h_j$ is a local representation in the canonical chart representation of a generator in $Z$ and $m_i\leqslant m$. In addition, up to applying the algorithm of Lemma \ref{L: composition 1} to this chart representation of $t$, we may assume that $\bigcup I_i=[0,1]$.  

Let $I_k$ be  an interval such that $1/3$ is in the closure of $I_k$ and such that (after applying a chart refinement if necessary) $I_k\subseteq \mathfrak{J}$. 

Then $(C_i\times g^{-1}(I_k), C_i\times J_i, t_ig)$ is in a canonical chart representation of the identity. By Lemma \ref{L: id}, $t_ig$ is the identity mapping. In particular, $g^{-1}(I_k)=J_i$, so that $J_i\subseteq \mathfrak{J}$ and $1/3$ is in the closure of $J_i$. 

We note that $t_i$ is not dyadic (i.e. of type (I)) by Lemma \ref{lem-property-of-dyadic-maps}.  
If $t_i=fg_1f_1\cdots g_nf_n$ is a chart of type (II), then, by Lemma \ref{lemma-easy-observation}, we may assume that $t_i$ is special. Thus  $t_i=g_1\cdots g_n \in G_2$ (Lemma \ref{lem-property-of-dyadic-maps}), where $n\leqslant m_i \leqslant m$. 

Thus we may assume that $t_i=x_{j_1}\cdots x_{j_{m_i}} \in G_2$. Then $|g|_X\leqslant m_i \leqslant m=|g|_Z$.  
We conclude that the embedding is isometric. 
\end{proof}
Combining Lemma \ref{lem: isometry arman embedding} with Lemma \ref{lem-isometry-splinter gp}, we obtain:
\begin{cor}
\label{corollary-isometry-thompson-a}
    If $G$ is finitely generated, then the embedding $G \hookrightarrow T(G_2, \varphi)$ is isometric. \qed
\end{cor}
\begin{proof}[Proof of Theorem \ref{T: thompson-1}] 
 The simplicity of $T(G_2, \varphi)$ follows from Lemma \ref{lem: the unifying lemma}. From Corollary \ref{cor-wp-thompson-a}, the word problem in  $T(G_2, \varphi)$ is decidable provided that it is decidable in $G_2$. By Corollary \ref{corollary-frattini-thompson-a}, the embedding $G \hookrightarrow T(G_2,\varphi)$ is Frattini. By Corollary \ref{corollary-isometry-thompson-a}, it is an isometric embedding provided that $G$ is finitely generated. Therefore, the embedding $G \hookrightarrow T(G_2,\varphi)$ satisfies Theorem \ref{T: thompson-1}.
\end{proof}	
	\appendix
	
\section{Thompson's embedding revisited}
\label{AS: Thompson's theorem}

 
Here we adapt the original embedding construction of  \cite{thompson_word_1980} to the setting of our paper and note that, in addition, it is an isometric embedding.

 \begin{thm}\label{T: thompson} Every computable group $G$ Frattini embeds into a finitely generated simple group $H$ with decidable word problem. Moreover, if $G$ is finitely generated, the embedding is isometric. 
\end{thm}

\begin{rem}
The original statement \cite{thompson_word_1980} is for finitely generated groups, but finite generation can be replaced by computability of $G$ due to Theorem \ref{L: embedding into 2 generated}. 
\end{rem}

\subsection{The embedding construction}

Let $G$ be a computable group. By Theorem \ref{T: splinter}, $G$ embeds into a finitely generated perfect group $G_1$ with decidable word problem (if $G$ is finitely generated, this claim also follows from \cite[\S 2]{thompson_word_1980}). 

Let $G_1=\{ g_1, g_2, \ldots \}$  be enumerated so that $\mathfrak{m}: \mathbb{N} \times \mathbb{N} \rightarrow \mathbb{N}$,  defined as $\mathfrak{m}((i, j)) = k$ if $g_i g_j =g_k$, is computable. By Remark \ref{R: computable group and word problem} the existence of  $\mathfrak{m}$ is equivalent to decidability of the word problem. 

Let $J=[ \frac{1}{2},1)$. For strictly positive $k\in \mathbb{N}$, let $$I_k:= \left[ \frac{2^k-1}{2^k}, \frac{2^k-1}{2^k}+\frac{1}{2^{2k}} \right).$$
We observe that any two such intervals are disjoint. 

We denote by $I^l_k$ the left half of the interval, and by $I^r_k$ the right half, so that 
\[
I^l_k=\left[ \frac{2^k-1}{2^k}, \frac{2^k-1}{2^k}+\frac{1}{2^{2k+1}} \right) \hbox{ and } I^r_k=\left[ \frac{2^k-1}{2^k}+\frac{1}{2^{2k+1}}, \frac{2^k-1}{2^k}+\frac{1}{2^{2k}} \right).
\]
 
For every $l\in \mathbb{N}$, let $\xi_l: J \rightarrow J$ be the piecewise homeomorphism, whose pieces are dyadic, and that, for every $k\in \mathbb{N}$, maps  $I_k^r$ onto $I_{\mathfrak{m}(l,k)}^r$ and that is the identity map elsewhere on $J$. 

Let us define $\lambda: G_1 \rightarrow \mathcal{C}(J)$  by $\lambda(g_l)=\xi_l$, for all $l\in \mathbb{N}$. 

\begin{rem}\label{R: thompson 1} The map $\lambda$ is an embedding of $G_1$ into computable maps in  $\mathcal{C}(J)$. 
\end{rem}
\begin{rem}\label{R: thompson 2}
If $\lambda(g_l)$ is fixing a right half $I^r_k$, then $g_l=1$. Indeed, then $\mathfrak{m}(l,k)=k$, so that $g_lg_k=g_k$. 
\end{rem}

\subsection{\texorpdfstring{$G_2$}{G}--dyadic maps}
Let $G_2 = \lambda(G_1)$. We study $G_2$--dyadic maps, see Definition \ref{G-dyadic}. 
 
Let $J'\subset J$ be an interval such that the closure of $J'$ contains 1. We want to prove:
 \begin{lem}\label{L: thompson} 
 {There is an algorithm to decide that a $G_2$--dyadic map $\Lambda: J' \to J'$ is equal to the identity.}
 \end{lem}

 \begin{rem} Our definition of $\lambda$ follows the construction in \cite[\S 3]{thompson_word_1980}. The main arguments to prove Lemma \ref{L: thompson}, cf.  Lemma \ref{L: thompson R1}, \ref{L: thompson R2}, \ref{L: thompson R3} and \ref{L: thompson R4} below, are essentially those of \cite[\S 3]{thompson_word_1980}. 
 \end{rem}
 
 \begin{rem} We assume that the closure of $I_n, \ldots ,I_0$, and $J_n,\ldots,J_0$ of Definition \ref{G-dyadic} contains $1$. This is no restriction in generality.
 
 Indeed, if $\Lambda:g_1f_1\cdots g_nf_n$ and $I_{i+1}$ (equivalently, $J_i$) does not contain $1$, then $g_{i}:J_i\to I_{i+1}$ has only finitely many dyadic pieces on $I_i$. Therefore, a finite sequence of chart subdivisions at the breakpoints of $g_i$  on $I_i$  transforms $f_{i+1}g_if_i$ into a finite number of dyadic maps. Thus, we can algorithmically split $\Lambda$ into a finite number of $G$-dyadic maps of $<n$ factors.1
 \end{rem}

\begin{lem}\label{L: thompson R0}  Let $\Lambda=g_nf_n\cdots g_1f_1:J'\to J'$ be a $G_2$--dyadic map. Then all dyadic factors $f_i$ of $\Lambda$ fix $1$.  
\end{lem}
  \begin{proof} By contradiction, let $f=f_{i}$ be a dyadic map such that  $f(1)\not=1$. Up to inverting $\Lambda$, $f(1)>1$. This contradicts the definition of $G_2$--dyadic maps, Definition \ref{G-dyadic}. 
 \end{proof}
 
 For $n\in \mathbb{Z}$, we write $s_n(x):= 2^{-n} x+(1-2^{-n}).$  All dyadic maps that fix $1$ are of this form. Note that $s_{n+m}=s_n \circ s_m$. 
We call $|n|$ the \emph{degree} of $s_n$.

\begin{lem}\label{L: thompson R1} Let $n\not = 0$,  and let $g\in G_2$. Then, for all  $k> |n|$, $gs_n$ and $s_n$ are equal on $I^r_{k}$. Moreover, $s_ng$ acts as the identity on at most finitely many $I_k^r$. 
\end{lem}

\begin{proof}
Let $n>0$ and $k>n$. 
Direct computations show that $$s_{-n}(I^r_{k}) = \left[\frac{2^{k-n}-1}{2^{k-n}}+\frac{1}{2^{2k-n+1}}, \frac{2^{k-n}-1}{2^{k-n}}+\frac{1}{2^{2k-n}}  \right) \subset I^l_{k-n}.$$
Since, by definition, $g$ acts trivially on $I^l_{k-n}$, we get  $gs_{-n}$ coincides with $s_{-n}$ on $I^r_{k}$.

 Similarly, $I^r_k\subset s_n(I^l_{k-n})$, so that $s_n(I_{k-n}^r)$ does not intersect with $I_l^r$, for any $l>0$. Thus, by definition, $g$ acts trivially on $s_n(I_{k-n}^r)$, and $gs_{n}$ coincides with $s_{n}$ on $I^r_{k-n}$. 
 
In addition, as $g$ permutes the intervals $I_k^r$, $s_ng$ acts as the identity on at most finitely many $I_k^r$.

\end{proof}

Let $m>0$ and for all $1\leqslant i \leqslant m$, let $g_i\not =1$  in $G_2$, and $n_i\not= 0 $ in $\mathbb{N}$.  
Let us fix $$\Lambda = g_ms_{n_m}\cdots g_1s_{n_1}$$ to be a $G_2$--dyadic map as in Lemma \ref{L: thompson}. 
Let $S_0=id $, $S_1=s_{n_1}$, and, recursively, $S_i=s_{n_i}S_{i-1}$. 

\begin{lem}\label{L: thompson R2} 
If, for all $i<m$, $S_i\not= id$ and $k$ is strictly larger than the degree of $S_i$, then  $\Lambda$ acts as $g_mS_{m}$ on  $I_k^r$.  
In particular, $\Lambda \not= id$. 
\end{lem}

 \begin{proof} Let $k$ be strictly larger than the degree of $S_i$, for all $i<m$. By Lemma \ref{L: thompson R1}, as $k>|n_1|$, $g_1s_{n_1}$ equals to $s_{n_1}$ on $I^r_{k}$. Thus, restricted to these intervals, $ g_ms_{n_m}\cdots g_1s_{n_2+n_1}$ equals to $\Lambda$. By induction this yields the first assertion. 
 
We show that  $g_m s_{n_m+\ldots +n_1} \not = id$ on all but finitely many of the intervals $I_k^r$. If $ s_{n_m+\ldots +n_1}\not=id$, this is by Lemma \ref{L: thompson R1}. Otherwise $g_m s_{n_m+\ldots +n_1} =g_m \not = id$, which yields the claim by Remark \ref{R: thompson 2}.
 \end{proof}

If $m>0$, let $i_0$ be the smallest index such that $n_{i_0}+\ldots +n_1=0$, and recursively define $i_j$ to be the smallest index such that $n_{i_j}+\ldots +n_{i_{j-1}+1}=0$. Let $i_M$ be the largest such index $<m$. 
\begin{lem}\label{L: thompson R3} If $n_m+ \ldots +n_2+n_1=0$, then $\Lambda$ equals to $g_mg_{i_{M}}g_{i_{M-1}}\cdots g_{i_1}g_{i_0}$ on all but a finite  number of  intervals $I_k^r$, which can be algorithmically determined. Otherwise, $\Lambda \not= id$. 
\end{lem}

\begin{proof}  If $m=0$ the claim follows by Lemma \ref{L: thompson R2}. Let $m>0$.
 
 By Lemma \ref{L: thompson R1}, $g_ms_{n_m}g_{m-1}\ldots g_{i_1}$ and $\Lambda$ are equal on $I^r_{k}$ unless $k$ is smaller than the degree of $S_i$, for some $i\leqslant i_0$. Inductively, $g_ms_{n_m}g_{m-1}\ldots g_{i_j}$ and $g_ms_{n_m}g_{m-1}\ldots g_{{i_{j-1}+1}}s_{i_{j-1}+1}$ equal on $I_{l_j}^r:=g_{i_{j-1}}\cdots g_{i_{0}}(I^r_{k})$ unless $l_j$ is smaller than the degree of $S_i$, for some $i_{j-1} <i\leqslant i_j$. Finally,  $g_ms_{n_m+\ldots + i_{M}+1}$ and $g_ms_{n_m}g_{m-1}\ldots g_{i_M+1}s_{M+1}$ are equal on $I_{l_M}=g_{i_{M}}\cdots g_{i_0}(I^r_{k})$, unless $l_M$ is smaller than the degree of $S_i$, for some $i_{M-1} <i\leqslant i_M$. 

Let $g:=g_{i_{M}}g_{i_{M-1}}\cdots g_{i_1}g_{i_0}$. 
We conclude that $\Lambda$ is equal to $g_ms_{m+\ldots +i_{M}+1}g$ on all but a finite number of intervals $I_k^r$. As the degree of the $S_i$ is computable, we can algorithmically determine these intervals. 
 If $s_{m+\ldots +i_{M}+1}=id$, this concludes the proof. Otherwise, by Lemma \ref{L: thompson R1}, $\Lambda$ acts as $s_{n_m+\ldots i_{M}+1}g$ on all but finitely many intervals $I_k^r$. Thus, $\Lambda\not=id$ by Lemma \ref{L: thompson R1}.    
 \end{proof}

 \begin{cor}\label{C: thompson A1} There is an algorithm to decide whether $\Lambda$ is the identity on the intervals $I_k^r$ in $J'$. \qed 
 \end{cor}
  \begin{proof}  By Lemma \ref{L: thompson R3}, there is a computable number $k_0>0$ such that, for all $k\geqslant k_0$, $\Lambda= id$ on $I_k^r$, if, and only if, $g_mg_{i_{M}}\cdots g_{i_1}g_{i_0}=1$. As the word problem in $G$ is decidable, this can be algorithmically determined.  
On the other hand, for each $k$, there is an (obvious) algorithm to decide whether or not $\Lambda$ acts as the identity on $I_k^r$. We apply this algorithm for each $k< k_0$. This completes the proof. 
 \end{proof}

 \begin{lem}\label{L: thompson R4} 
Let $x\in J'\setminus \bigcup_{k=1}^{\infty}\bigcup_{i=0}^m S_i^{-1}(I_k^r)$. Then $\Lambda(x)=S_m(x)$.
 \end{lem}
\begin{proof} Since, for all $k$,  $x\not \in S_{1}^{-1}(I_k^r)$, we have that $\Lambda(x)=g_ms_{m_n}\cdots g_{n_2}s_{n_2}s_{n_1}(x)$. By induction, $\Lambda(x)=S_m(x)$, which is the claim. 
\end{proof}

 \begin{proof}[Proof of Lemma \ref{L: thompson}] By Lemma \ref{L: thompson R0}, all dyadic factors in a $G_2$--dyadic map fix $1$. We first compute the degree of $S_m$. If the degree of $S_m$ is not $0$, then Lemma $\ref{L: thompson R3}$ implies that $\Lambda\not= id$. 
 
 Otherwise, Lemma \ref{L: thompson R4}  implies that $\Lambda$ is the identity on $ J'\setminus \bigcup _ {k=1}^{\infty}\bigcup_{i=0}^m S_i^{-1}(I_k^r)$. 
   Let $0\leqslant i \leqslant m$. We argue that there is an algorithm to decide whether or not $\Lambda$ is the identity on the intervals $S_i^{-1}(I_k^r)$ in $J'$. This will complete the proof. 
 
 Let $x\in S_i^{-1}(I_k^r)$, and let $y\in I_k^r$ be the point such that $x=S_i(y)$. We note that $\Lambda(x)=x$ if, and only if, $\Lambda(S_iy)=S_iy$, if, and only if, $S_i^{-1}\Lambda S_i (y)=y$.  Therefore, we need to decide whether or not the $G_2$--dyadic map $S_i^{-1}\Lambda S_i$ is the identity on the intervals $I_k^r$ such that $S_i^{-1}(I_k^r)\subset J'$.  
 
 Let $k_0>0$ be the smallest index such that for all $k\geqslant k_0$, $I_k^r \subset S_i(J')$. As $S_i(J')$ can be algorithmically determined, $k_0$ can be computed as well. Thus, we need to decide whether or not $S_i^{-1}\Lambda S_i$ is the identity on the intervals $I_k^r$ in $\left[\frac{2^{k_0}-1}{2^{k_0}},1\right)$. By Corollary \ref{C: thompson A1} such an algorithm exists.
 \end{proof}

\subsection{Frattini property}

To conclude Thompson's theorem, Theorem \ref{T: thompson}, we also need: 

\begin{lem} The embedding $\lambda: G_1 \to T(G_2,\varphi)$ is a Frattini embedding.
\end{lem} 

The proof of this lemma is analogous to the proof of Lemma \ref{L: Frattini left order}.  

\begin{proof}
Let $h,g \in G_2$ and $t\in T(G_2,\varphi)$. We assume that $ht^{-1}gt=1$. 

We represent $t$ by a canonical chart representation $(C_i\times I_i, C_j\times J_i, t_i)$ such that $\bigcup_{i}J_i=[0,1]$; and represent $t^{-1}$ by $(C_i\times J_i, C_i\times I_i, t_i)$. We recall that $t_i(I_i)=J_i$. 

Let $k$ be an index such that  $1$ is in the closure of $J_k$ and such that (after applying a chart refinement if necessary) $J_k\subseteq J$. As $g$ is fixing $1$, there is $J_k'\subseteq J_k$ such that $g(J_k')\subseteq  J_k$ and such that  $1/3$ is in the closure of $J_k'$. We let $I_k'=t_k^{-1}(J_k')$ and $I_k''= t_k^{-1}gt_k(I_k')$. 

Then the triple $(C\times I_k', C \times I_k'', t_k^{-1}gt_k)$ is in a canoncial chart representation of $t^{-1}gt$.  
 Up to applying the algorithm of Lemma \ref{L: composition 1} to this chart representation, we may assume that $I_k''$ is in $[0,1]$. Moreover, up to applying a chart refinement if necessary, we may assume that either $I_k''\cap \mathfrak{J}$ is empty or consists of one point $1$, or $I_k''\subseteq \mathfrak{J}$.

If $I_k''\cap J$ is empty or consists of one point, then $(C\times I_k', C \times I_k'', t_k^{-1}gt_k)$ is in a chart representation of $ht^{-1}gt$. Thus $t_k^{-1}gt_k=id$ on $I_k'$ by Lemma \ref{L: id}. This implies that $g$ acts as the identity on $J_k'$.  As $1$ is in the closure of $J_k'$, this implies that $g=1$.

Otherwise, $(C\times I_k', C \times h(I_k''), ht_k^{-1}gt_k)$ is in a chart representation of $ht^{-1}gt$.  Thus $ht_k^{-1}gt_k=id$ on $I_k'$ by Lemma \ref{L: id}. But then $t_kht_k^{-1}g:J_k'\to J_k'$ has to be the identity as well. As $g$ is fixing $1$, $t_kht_k^{-1}$ has to fix $1$. 

If $t_k$ does not fix $1$, $h$ acts (up to applying finitely many chart refinements if necessary) as a dyadic map on $I_k'$. But it has to fix $t_k^{-1}(1)$. Thus $h$ acts as the identity on $I_k'$. This implies that $g=1$ by Remark \ref{R: thompson 1}.

Otherwise, by Lemma \ref{L: thompson R3}, there are $g_{i_M}, \ldots, g_{i_0} \in G_2$ such that, on all but finitely many of the intervals $I_j^r$, the maps  $h^{-1}t_k^{-1}gt_k$ equals to $h^{-1}g_{i_0}^{-1}\cdots  g_{i_M}^{-1}gg_{i_M}\cdots g_{i_0}$. By Remark \ref{R: thompson 2}, this implies that $h$ and $g$ are conjugated in $G_1$. 
\end{proof}

Moreover, the embedding of Thompson is also an isometric embedding. 
	\begin{lem}\label{lem: isometry thompson embedding} The embedding $G_1 \hookrightarrow T(G_2,\varphi)$ is an isometric embedding.
\end{lem} 	
\begin{proof} We fix a finite generating set for $G_1$. This gives a finite generating set for $(G_2,\Phi)$. We denote by $|h|$ the word metric of $h$. 

Let $g\in G_2$ and $t\in T(G,\Phi)$ such that $tg=1$. We represent $t$ by finitely many (canonical) charts $(C_i\times I_i, C_i\times J_i, t_i)$ such that $\bigcup I_i=[0,1]$. We note that $|t_i|\leqslant |t|$. 

Let $I_k$ be the interval such that $1$ is in the closure of $I_k$ and such that (after applying a chart refinement if necessary) $I_k\subseteq J$. 

Then $(C_i\times g^{-1}(I_k), C_i\times J_i, t_ig)$ is in a canonical chart representation of $tg$. By Lemma \ref{L: id}, $t_ig$ is the identity mapping. In particular, $g^{-1}(I_k)=J_i$, so that $J_i\subseteq J$ and $1$ is in the closure of $J_i$. 

If $t_i$ is a dyadic map, then $t_i=id$ (Lemma \ref{L: thompson R1}) and thus $g=1$ . 

If $t_i \in G_2$,  then $g=t_i^{-1}$ and $|g|=|t_i|\leqslant |t|$.

Otherwise, $t_i=g_1f_1\cdots g_nf_n$ is a $G_2$--dyadic map. Moreover, by Remark \ref{R: thompson 2}, we may assume that all dyadic maps $f_i$ fix $1$. Thus, by Lemma \ref{L: thompson R3},  $g=g_{i_1}\cdots g_m$. Thus $|g|\leqslant m \leqslant |f|$.  

We conclude that the embedding is isometric. 
\end{proof}
 Combining Lemma \ref{lem: isometry thompson embedding} with Lemma \ref{lem-isometry-splinter gp}, we obtain
\begin{cor}
    If $G$ is finitely generated, then the embedding $G \hookrightarrow T(G_2, \varphi)$ is isometric. \qed
\end{cor}
\begin{proof}[Proof of Theorem \ref{T: thompson}] 
  By Lemmas \ref{L: thompson} and \ref{lemma-word problem in T-H-Phi}, the group $T(G_2,\varphi)$ has decidable word problem. This group is also  finitely generated and simple, see Lemmas \ref{Lem: fin gen of the main group} and  \ref{lem: the unifying lemma}. By construction, $G$ embeds into $T(G_2,\varphi)$.  
\end{proof}

\begin{rem} If $G$ is non-computably left-orderable with decidable word problem, it is open whether $T(G_2,\varphi)$ is left-orderable as well.
\end{rem}

\mbox{}

 \addtocontents{toc}{\setcounter{tocdepth}{-10}}
 
\bibliographystyle{alpha}

\bibliography{eilos}

\begin{thebibliography}{HLNR19}

\bibitem[{Ber}91]{bergman_right_1991}
George~M. {Bergman}.
\newblock {Right orderable groups that are not locally indicable.}
\newblock {\em {Pac. J. Math.}}, 147(2):243--248, 1991.

\bibitem[BG09]{bludov_word_2009}
V.~V. Bludov and A.~M.~W. Glass.
\newblock Word problems, embeddings, and free products of right-ordered groups
  with amalgamated subgroup.
\newblock {\em Proc. Lond. Math. Soc. (3)}, 99(3):585--608, 2009.

\bibitem[Bri98]{bridson_embedding_98}
Martin~R. Bridson.
\newblock Controlled embeddings into groups that have no non-trivial finite
  quotients.
\newblock In {\em The {E}pstein birthday schrift}, volume~1 of {\em Geom.
  Topol. Monogr.}, pages 99--116. Geom. Topol. Publ., Coventry, 1998.

\bibitem[BZ20]{belk_zaremsky}
James Belk and Matthew C.~B. Zaremsky.
\newblock {Twisted Brin-Thompson groups.}
\newblock {\em arXiv preprint arXiv:2001.04579}, 2020.

\bibitem[CFP96]{cannon_introductory_1996}
J.~W. Cannon, W.~J. Floyd, and W.~R. Parry.
\newblock Introductory notes on {R}ichard {T}hompson's groups.
\newblock {\em Enseign. Math. (2)}, 42(3-4):215--256, 1996.

\bibitem[CR16]{clay_ordered_2016}
Adam Clay and Dale Rolfsen.
\newblock {\em Ordered groups and topology}, volume 176 of {\em Graduate
  Studies in Mathematics}.
\newblock American Mathematical Society, Providence, RI, 2016.

\bibitem[Dar15]{darbinyan_group_2015}
A.~Darbinyan.
\newblock Group embeddings with algorithmic properties.
\newblock {\em Comm. Algebra}, 43(11):4923--4935, 2015.

\bibitem[Dar19]{arman_new}
A.~Darbinyan.
\newblock Computability, orders, and solvable groups.
\newblock {\em arXiv preprint arXiv:1909.05720}, 2019.
\newblock accepted for publication in the {Journal of Symbolic Logic}.

\bibitem[DK86]{downey_recursion_1986}
R.~G. Downey and Stuart~A. Kurtz.
\newblock Recursion theory and ordered groups.
\newblock {\em Ann. Pure Appl. Logic}, 32(2):137--151, 1986.

\bibitem[DNR14]{deroin_groups_2014}
Bertrand Deroin, Andr{\'e}s Navas, and Crist{\'o}bal Rivas.
\newblock Groups, orders, and dynamics.
\newblock {\em arXiv preprint arXiv:1408.5805}, 2014.

\bibitem[Dow98]{downey_survey}
R.~G. Downey.
\newblock Computability theory and linear orderings.
\newblock In {\em Handbook of recursive mathematics, {V}ol. 2}, volume 139 of
  {\em Stud. Logic Found. Math.}, pages 823--976. North-Holland, Amsterdam,
  1998.

\bibitem[FS56]{frohlich_effective_1956}
A.~{Fr\"ohlich} and J.~C. {Shepherdson}.
\newblock {Effective procedures in field theory.}
\newblock {\em {Philos. Trans. R. Soc. Lond., Ser. A}}, 248:407--432, 1956.

\bibitem[Gla81]{glass_ordered_1981}
A.~M.~W. Glass.
\newblock {\em Ordered permutation groups}, volume~55 of {\em London
  Mathematical Society Lecture Note Series}.
\newblock Cambridge University Press, Cambridge-New York, 1981.

\bibitem[Gor74]{gorjuskin_imbedding_1974}
A.~P. Gorju\v{s}kin.
\newblock Imbedding of countable groups in {$2$}-generator simple groups.
\newblock {\em Mat. Zametki}, 16:231--235, 1974.
\newblock English translation: Math. Notes 16 (1974), no. 2, 725â727
  (1975).

\bibitem[Hal74]{hall_embedding_1974}
P.~Hall.
\newblock On the embedding of a group in a join of given groups.
\newblock {\em J. Austral. Math. Soc.}, 17:434--495, 1974.
\newblock Collection of articles dedicated to the memory of Hanna Neumann,
  VIII.

\bibitem[Hig51]{higman_finitely_1951}
Graham Higman.
\newblock A finitely generated infinite simple group.
\newblock {\em J. London Math. Soc.}, 26:61--64, 1951.

\bibitem[HL19]{hyde_finitely_2018}
James Hyde and Yash Lodha.
\newblock Finitely generated infinite simple groups of homeomorphisms of the
  real line.
\newblock {\em Inventiones mathematicae}, Mar 2019.
\newblock online first, doi:10.1007/s00222-019-00880-7.

\bibitem[HLNR19]{hyde_uniformly_2019}
James Hyde, Yash Lodha, Andr{\'e}s Navas, and Christ{\'o}bal Rivas.
\newblock Uniformly perfect finitely generated simple left orderable groups.
\newblock {\em arXiv preprint arXiv:1901.03314}, 2019.

\bibitem[HT18]{harrison_left-orderable_2018}
Matthew Harrison-Trainor.
\newblock Left-orderable computable groups.
\newblock {\em J. Symb. Log.}, 83(1):237--255, 2018.

\bibitem[KKL19]{kim_chain_2019}
Sang-hyun Kim, Thomas Koberda, and Yash Lodha.
\newblock Chain groups of homeomorphisms of the interval.
\newblock {\em Ann. Sci. \'{E}c. Norm. Sup\'{e}r. (4)}, 52(4):797--820, 2019.

\bibitem[LS77]{lyndon_schupp}
Roger~C. Lyndon and Paul~E. Schupp.
\newblock {\em Combinatorial group theory}.
\newblock Springer-Verlag, Berlin-New York, 1977.
\newblock Ergebnisse der Mathematik und ihrer Grenzgebiete, Band 89.

\bibitem[{Mal}61]{matsev_constructive_1961}
A.~I. {Mal'tsev}.
\newblock {Constructive algebras. I.}
\newblock {\em {Russ. Math. Surv.}}, 16(3):77--129, 1961.

\bibitem[MBT18]{mattebon_groups_2018}
Nicol{\'a}s Matte~Bon and Michele Triestino.
\newblock Groups of piecewise linear homeomorphisms of flows.
\newblock {\em arXiv preprint arXiv:1811.12256}, 2018.
\newblock accepted for publication in Compositio Mathematica.

\bibitem[Neu60]{neumann_embedding_1960}
B.~H. Neumann.
\newblock Embedding theorems for ordered groups.
\newblock {\em J. London Math. Soc.}, 35:503--512, 1960.

\bibitem[{Rab}60]{rabin_computable_1960}
M.~O. {Rabin}.
\newblock {Computable algebra, general theory and theory of computable fields.}
\newblock {\em {Trans. Am. Math. Soc.}}, 95:341--360, 1960.

\bibitem[Sch76]{schupp_embeddings_1976}
Paul~E. Schupp.
\newblock Embeddings into simple groups.
\newblock {\em J. London Math. Soc. (2)}, 13(1):90--94, 1976.

\bibitem[Tho80]{thompson_word_1980}
Richard~J. Thompson.
\newblock Embeddings into finitely generated simple groups which preserve the
  word problem.
\newblock In {\em Word problems, {II} ({C}onf. on {D}ecision {P}roblems in
  {A}lgebra, {O}xford, 1976)}, volume~95 of {\em Stud. Logic Foundations
  Math.}, pages 401--441. North-Holland, Amsterdam-New York, 1980.

\end{thebibliography}

\end{document}